\numberwithin{equation}{section}
\newtheorem{theorem}{Theorem}[section]
\newtheorem{lemma}[theorem]{Lemma}
\newtheorem{corollary}[theorem]{Corollary}
\newtheorem{proposition}[theorem]{Proposition}
\theoremstyle{definition}
\newtheorem{definition}[theorem]{Definition}
\theoremstyle{remark}
\newtheorem{remark}[theorem]{Remark}
\newtheorem{question}[theorem]{Question}
\newtheorem{example}[theorem]{Example}
\newcommand{\calD}{\mathcal{D}}
\newcommand{\frakf}{\mathfrak{f}}
\newcommand\frakS{\mathfrak{S}}
\newcommand{\bbT}{\mathbb{T}}
\newcommand{\bbF}{\mathbb{F}}
\newcommand{\bbP}{\mathbb{P}}
\newcommand{\bbG}{\mathbb{G}}
\newcommand{\la}{\langle}
\newcommand{\ra}{\rangle}
\DeclareMathOperator{\Sing}{Sing}
\DeclareMathOperator{\Ram}{Rm}
\DeclareMathOperator{\Bit}{Bit}
\DeclareMathOperator{\cha}{char}
\DeclareMathOperator{\Flex}{Flex}
\DeclareMathOperator{\rank}{rank}
\DeclareMathOperator{\Rey}{Rey}
\def\pr{{\mathrm{pr}}}
\def\half{\tfrac{1}{2}}
\def\da{\dasharrow}
\def\tsigma{\tilde{\sigma}}
\def\tX{\tilde{X}}
\newcommand{\lra}{\longrightarrow}
\setlist[enumerate,1]{label={\rm(\arabic*)}, ref={\rm\arabic*}}
\newcommand{\supth}[1]{\ensuremath{#1^{\mathrm{th}}}}
\title{Bitangent surfaces and involutions of quartic surfaces}
\author{Igor Dolgachev}
\address{Department of Mathematics, University of Michigan, 525 East University Avenue, Ann Arbor, MI 48109-1109, USA}
\email{idolga@umich.edu}
\author{Shigeyuki Kond\=o}
\address{Graduate School of Mathematics, Nagoya University, Nagoya, 464-8602, Japan}
\email{kondo@math.nagoya-u.ac.jp}
\begin{document}



\maketitle

\begin{prelims}

\DisplayAbstractInEnglish

\bigskip

\DisplayKeyWords

\medskip

\DisplayMSCclass

\end{prelims}


\newpage

\setcounter{tocdepth}{1}

\tableofcontents


\section{Introduction} 
Let $G_1(\bbP^3)$ be the Grassmannian of lines in $\bbP^3$. For a point in $G_1(\bbP^3)$, the corresponding line in $\bbP^3$ is called a \emph{ray}.  A surface $F \subset G_1(\bbP^3)$ is called a \emph{congruence of lines}. Its algebraic class $[F]$ in the Chow ring $A^\bullet(G_1(\bbP^3))$ is determined by two numbers: the \emph{order} $m$ and the \emph{class} $n$.  The order (resp.\ class) is the number of lines on $F$ passing through a general point (resp.\ contained in a general plane) in $\bbP^3$. We will call the pair $(m,n)$ the \emph{bidegree} of $F$.  The number $m+n$ is equal to the degree of $F$ in the Pl\"ucker embedding $G_1(\bbP^3)\hookrightarrow \bbP^5$ (\textit{cf.} \cite[Section~X.2]{SR},
\cite{DoC,J}).

A surface $X$ in $\bbP^3$ defines a congruence of lines in $\bbP^3$ equal to the closure of the set of lines that are tangent to $X$ at two distinct points.  It is classically known as the \emph{bitangent surface}, and we denote it by $\Bit(X)$.  For a general $X$, its order and class are also classically known, and we will reproduce the calculation in this paper with the novelty of taking care of the case of characteristic $2$.

For example, it is known that, for a general surface of degree $d$, the bidegree of the surface $\Bit(X)$ is equal to $(\half d(d-1)(d-2)(d-3), \half d(d-2)(d^2-9))$; see \cite[Volume 1, Section~XI.279, p.~283]{Salmon}. In particular, the bidegree of the bitangent surface of a general quartic surface is equal to $(12,28).$ Moreover, the bitangent surface is a smooth surface of degree $40$ in $\bbP^5$. It is of general type with $p_g = 45$ and $K^2 =360$; see \cite{Tikhomirov, Welters}.
  
Although, for a general smooth quartic $X$, the surface $\Bit(X)$ is irreducible, for special quartic surfaces, even smooth ones, it can be reducible. It is known that, if a quartic surface $X$ is smooth and does not contain lines, then $\Bit(X)$ is smooth; see \cite{Zucconi1}. However, the surface $\Bit(X)$ could be reducible even when~$X$ is smooth but contains lines.  For example, a general Cayley's symmetroid quartic surface admits a smooth quartic model with reducible bitangent surface. One of its irreducible components is a Reye congruence of bidegree $(7,3)$; see \cite{Cossec} and \cite[Section~7.4]{Enriques2}.
 
Although admitting some mild singular points in $X$ does not change the bidegree, which may drastically change the surface $\Bit(X)$, it may even  become reducible. For example, when $X$ is realized as the focal surface of a congruence of lines of bidegree $(2,n)$ (the focal surface of a congruence $F$ of lines of order $n > 1$ is a surface in $\bbP^3$ such that all rays of $F$ are its bitangents; see, for example, \cite[Section~X.2]{SR}), the surface $\Bit(X)$ is always singular, although its singular points are ordinary double points. In the case $n = 2$, the congruence of lines is a del Pezzo surface of degree $4$, the focal surface is the famous $16$-nodal Kummer quartic surface $X$, see \cite{Kummer}, and the number of irreducible components of $\Bit(X)$ is equal to $22$; six of them are of bidegree $(2,2)$, and $16$ of them are of bidegree $(0,1)$. They are planes of lines contained in one of the $16$ planes intersecting $X$ along a double conic, a \emph{trope plane}.

The paper is addressing the problem of the decomposition of $\Bit(X)$ into irreducible components for quartic surfaces over an algebraically closed field of arbitrary characteristic $p$. The case $p = 2$ and $X$ being a Kummer quartic surface is of special importance to us.  Recall that smooth curves $C$ of genus $2$ in characteristic $2$ are divided into three types, that is, \emph{ordinary curves}, \emph{curves of\, $2$-rank $1$} and \emph{supersingular curves}.  The Jacobian $J(C)$ has four, two, or one 2-torsion point(s) accordingly.  The quotient surface $X=J(C)/(\iota)$ by the negation involution $\iota$ can be embedded in $\bbP^3$ with  image isomorphic to a quartic surface, the \emph{Kummer quartic surface} associated to $C$.  Instead of $16$ nodes in the case $p \ne 2$, the Kummer surface $X$ has four rational double points of type $D_4$, two rational double points of type $D_8$, or an elliptic singularity of type $\raise0.2ex\hbox{\textcircled{\scriptsize{4}}}_{0,1}^1$ in the sense of Wagreich \cite{W}.  The number of trope planes also drops; it is equal to $4, 2$, and $1$, respectively.
We will show that $\Bit(X)$ consists of the  irreducible components as in Table~\ref{table0}.

\begin{table}[ht]
 \centering
  \begin{tabular}{|c|c|c|}
   \hline  
  &Bidegree&Number\\ \hline 
  $p\ne 2$&$(2,2)$&6\\
 &$(0,1)$&16\\
\hline
 $p=2$, &$(1,1)$&3\\
ordinary &$(0,1)$&4\\ \hline
 $p=2$, &$(1,1)$&2\\
 $2$-rank 1&$(0,1)$&2\\ \hline
 $p=2$, &$(1,1)$&1\\
supersingular  &$(0,1)$&1\\  \hline
\end{tabular}
\caption{Irreducible components of $\mathrm{Bit}(X)$ for Kummer quartic surfaces $X$}\label{table0}
\end{table}
Observe that $(12,28)$ is a multiple of the sum of the bidegrees of irreducible components of the bitangent surfaces of ordinary Kummer surfaces in characteristic~$2$.  We do not know any explanation for this fact.

One reason for the dropping of the order of $\Bit(X)$ in characteristic $2$ is explained by the fact that the discriminant polynomial of a binary form is a square, and another is that irreducible components of bidegree $(1,1)$ appear instead of bidegree $(2,2)$.  On the other hand, the class, being equal to the number of bitangent lines to a general plane section $H$ of $X$, also drops. It is equal to $7, 4, 2$, or $1$ if the Hasse--Witt invariant of~$H$ is equal to $3, 2, 1$, or $0$, respectively; see \cite{R,SV}.

On the way, we discuss different kinds of involutions on quartic surfaces and their relationship to Cremona transformations. For example, an irreducible component of the bitangent surface $\Bit(X)$ with non-zero order and non-zero class defines a birational involution of $X$ with the pairs of tangency points of bitangent lines as its general orbits.

The plan of the paper is as follows.  In Section~\ref{sec1}, we will reproduce Salmon's proof for the formula of the bidegree of the bitangent surface of a general surface $X$ of degree $d$ in characteristic $0$ (Theorem~\ref{bidegrees}) and show it also holds  for smooth surfaces or surfaces with rational double points (Corollary~\ref{bidegrees2}).  In Section~\ref{sec2}, we discuss the classical results about the bitangent surface of a Kummer quartic surface in characteristic $p = 0$. They extend without change to the case $p\ne 2$.  In Section~\ref{involutions}, we discuss birational involutions of quartic surfaces.  In Section~\ref{sec3}, we give a proof (due to G.~Kemper) of the fact that the discriminant of a binary form in the case $p = 2$ is a square (Proposition~\ref{discri}).  This reduces the order of congruences of bitangent lines to half.  Finally, in  Sections~\ref{sec4}--\ref{sec6}, we determine the bidegrees of congruences of bitangent lines of Kummer quartic surfaces in characteristic $2$ according to ordinary, $2$-rank $1$, and supersingular, respectively (Theorems~\ref{(3,7)},~\ref{(2,4)}, and~\ref{(1,2)}).

Throughout the paper, we assume that the base field $\Bbbk$ is an algebraically closed field of characteristic $p\geq 0$.

\subsection*{Acknowledgments}
We would like to thank the referees for carefully reading the manuscript and for giving us many useful comments.

\section{Generalities on the surface of bitangent lines}\label{sec1}
Let $X$ be a normal surface of degree $d\ge 4$. We keep this assumption during the whole paper.  A line $\ell$ in~$\bbP^3$ is called a \emph{bitangent line} if either it is contained in $X$, or $X$ cuts out  a divisor $D = 2a+2b+D'$ in $\ell$.

Let $\Bit(X)\subset G_1(\bbP^3)$ be the variety of bitangent lines. If $\cha(\Bbbk)\ne 2$, we will show in this section that each irreducible component of $\Bit(X)$ is a surface. In Section~\ref{sec5}, we extend this result to the case where $\cha(\Bbbk) = 2$. Considered as a surface in $G_1(\bbP^3)$, $\Bit(X)$ has  bidegree $(m,n)$. Its degree in the Pl\"ucker embedding is equal to $m+n$.

Recall that $G_1(\bbP^3)$ contains two types of planes; one is called an {\it $\alpha$-plane} and consists of lines passing through a point of $\bbP^3$, and the other is called a {\it $\beta$-plane} and consists of lines contained in a plane of $\bbP^3$.

The following fact is, of course, well known, but to be sure that it is true without assumption on the characteristic, we supply a proof.

\begin{proposition}
An irreducible integral surface $S$ in $G_1(\bbP^3)$ of bidegree $(m,0)$ $($resp.\ $(0,n))$ is an $\alpha$-plane $($resp.\ $\beta$-plane$)$.  In particular, $m=1$ $($resp.\ $n=1)$.
\end{proposition}

\begin{proof}
Passing to the dual congruence of lines in the dual space of $\bbP^3$, it is enough to prove that any irreducible congruence of bidegree $(0,n)$ is a $\beta$-plane.  Let $Z_S = \{(x,\ell) \in \bbP^3\times S:x\in \ell\}$ be the restriction of the tautological projective bundle over $G_1(\bbP^3)$ to $S$. Since $S$ is irreducible, $Z_S$ is an irreducible $3$-fold. Since $m = 0$, its image $Y$ under the first projection is an irreducible subvariety of dimension $1$ or $2$.  Suppose $\dim Y = 1$. Then, the fibers of $Z_S\to Y$ are $\alpha$-planes that sweep at least one plane, and hence $S$ contains an $\alpha$-plane. So, $Y$ cannot be a curve.  Thus, $Y$ is a surface. The image of the fiber of $Z_S\to S$ over a point $y\in Y$ in $S$ is a curve contained in the $\alpha$-plane $\Omega(y)$ of lines through $y$.  The corresponding rays sweep a cone with vertex at $y$ contained in $Y$. Thus, $Y$ is a cone at each of its points. Since $Y$ is reduced, this can happen only if $Y$ is a plane. So, all rays of $S$ are contained in a plane, and hence, $S$ is a $\beta$-plane.
\end{proof}

\begin{proposition}\label{bidegrees}
Assume $\mathrm{char}(\Bbbk) = 0$, and let $X=V(F)$ be a normal surface of degree $d\geq 4$. Then, $\Bit(X)$ is a congruence of lines of bidegree $(m,n)$, with
$$
1\le m\le \half d(d-1)(d-2)(d-3), \quad n  =  \half d(d-2)(d^2-9).
$$
\end{proposition}

\begin{proof}
  We follow Salmon \cite[Volume 1, Section~XI.279, p.~283]{Salmon}.  Let $q$ be a general point in $\bbP^3$, and let $\ell = \la q,q'\ra$ be the line containing $q$ and tangent to $X$ at some point $q' = [x_0,y_0,z_0,w_0]$.  Without loss of generality, we may assume that $q = [0,0,0,1]$ and
$$
F = w^d+A_1w^{d-1}+\cdots+wA_{d-1}+A_d,
$$
where the $A_k$ are homogeneous forms of degree $k$ in $x,y,z$.

Plugging in the parametric equation $[s,t]\mapsto [sv+tv']$, where $[v] = q$ and $ [v'] = q'$, we get 
$$f:=F(sv+tv') = (s+tw_0)^d+\sum_{i=1}^dt^iA_i(x_0,y_0,z_0)(s+tw_0)^{d-i}.$$
By polarizing, we can rewrite it in the form
$$
f = \sum_{k+m=d}s^kt^{m}P_{v^k}(F)(v'),
$$
where $P_{v^k}(F)(v')$ is the value of the totally polarized symmetric multilinear form defined by $F$ at $(v,\ldots,v,v',\ldots,v')$. Geometrically, following the notation from \cite[Chapter 1]{DoC}, the locus of zeros of $P_{v^k}(F)(v')$ with fixed $v$ is the $\supth{k}$ polar $P_{q^k}(V(F))$ of the hypersurface $V(F)$.  Recall from \textit{loc.~cit.}~that the first \emph{polar hypersurface} $P_q(V(F))$ is the locus of zeros of $\sum a_i\frac{\partial F}{\partial x_i}$, where $q = [a_0,\ldots,a_n]$.  Since $q'\in X$, we get $P_{v^0}(F)(v') = F(v') = 0$.  Moreover, because $\ell$ is tangent to $X$ at $q'$, we obtain $P_{v}(F)(v') = 0$.  Thus, we can rewrite
$$
f = s^2g_{d-2}(s,t).
$$
The line $\ell$ is tangent to $X$ at some other point if and only if the binary form $g_{d-2}$ of degree $d-2$\
$$
g_{d-2}(s,t) = \sum_{k=0}^{d-2}s^kt^{d-2-k}P_{v^{k+2}}(F)(v')
$$
has a multiple root. 

Recall that the discriminant polynomial $D(a_0,\ldots,a_n)$ of a binary form $\sum_{i=0}^na_is^{n-i}t^i$ of degree $n$ is a homogeneous polynomial of degree $2(n-1)$.  The polynomial $D(a_0,\ldots,a_n)$ is also a bi-homogeneous polynomial in variables $a_0,\ldots,a_n$ of bidegree $(n(n-1),n(n-1))$ with respect to the action of $\bbG_m^2$ via
$$
(a_0,\ldots,a_n) \longmapsto (\lambda^na_0,\lambda^{n-1}\mu a_1,\ldots,\lambda\mu^{n-1}a_{n-1}, \mu^na_n).
$$
In other terms, it is a weighted homogeneous polynomial of degree $2(n-1)$ with the weights of the $a_k$ equal to $k$.
 
Applying this to the discriminant of the polynomial $g_{d-2}(s,t)$, we obtain that its discriminant $D(P_{v^d}(F)(v'),\ldots,P_{v^2}(F)(v'))$ is a polynomial of degree $\half (d-2)(d-3)$ in $x_0, y_0, z_0$. Hence we obtain that the locus of points $q'$ such that the line $\la q,q'\ra$ is tangent to $X$ at two points including $q'$ is contained in the intersection of hypersurfaces of degrees $d, d-1$ and $(d-2)(d-3)$.
This implies that the expected number of bitangent lines passing through $q$ is equal to $\half d(d-1)(d-2)(d-3)$.

The class $n$ of $\Bit(X)$ is equal to the number of bitangent lines to a general plane section $H$ of $X$. Since~$X$ is normal, the hyperplane section is a smooth plane curve of degree $d$.  Their number is well known classically; it is equal to $\half d(d-2)(d^2-9)$ (see \cite[Section~5.5.1, Formula (5.33)]{DoC}).

Since $\cha(\Bbbk) = 0$, the discriminants of the polynomials $g_{d-2}$ are not equal to zero.  This shows that $m\ge 1$.

To see that $\Bit(X)$ is a congruence of lines, \textit{i.e.}, each of its irreducible components is a surface, we consider the incidence variety
$$
M = \left\{(q,\ell)\in \bbP^3\times G_1\left(\bbP^3\right):\ell \in \Bit(X), q\in \ell\right\}.
$$
The fiber of the first projection consists of bitangent lines $\ell$ passing through $q$.  By the above, it consists of finitely many lines. This implies that each irreducible component of $M$ is of dimension $3$.  Since the fibers of the second projections are lines, the image of each irreducible component of $M$ is a surface.
\end{proof}

We denote by $\Flex(X)\subset G_1(\bbP^3)$ the variety of lines in $\bbP^3$ that either are contained in $X$ or intersect $X$ at some point with multiplicity at least $3$.  It intersects $\Bit(X)$ at the set of lines contained in $X$ and the set of bitangents that intersect $X$ at one point.

\begin{proposition}\label{flexes}
The expected bidegree of the congruence of lines $\Flex(X)$ is $(m,n)$, where $m\le d(d-1)(d-2)$ and $n \le 3d(d-2)$.
\end{proposition}

\begin{proof} 
Following the proof of the previous proposition, we find that the order $m$ of $\Flex(X)$ is equal to the degree of the reduced complete intersection $X\cap P_q(X)\cap P_{q^2}(X)$. We have $m\le d(d-1)(d-2)$.  The class~$n$ of $\Flex(X)$ is equal to the number of flex tangents in a general plane section $H$ of $X$.  It is equal to the number of intersection points of a plane section $H\cap X$ with its Hessian curve of degree $3(d-2)$. Clearly, $m\le 3d(d-2)$.
\end{proof}

Let $\pr_q\colon X\to \bbP^2$ be the projection of $X$ to a general plane.  The ramification curve $\Ram(q)$ of the projection is equal to $X\cap P_q(X)$.  Its degree is equal to $d(d-1)$.  Using the well-known formula for the arithmetic genus of a complete intersection of hypersurfaces, we obtain that its arithmetic genus is equal to $1+\half d(d-1)(2d-5)$.  The branch curve $B(q)$ of $\pr_q$ is equal to the projection of $\Ram(X)$; hence it is of degree $d(d-1)$. If $\Ram(q)$ is smooth, then it is isomorphic to the normalization of $B(q)$. A bitangent line containing the center of the projection is a secant line of $\Ram(X)$; hence its projection has a singular point of $B(q)$ with at least two different branches. For a quartic surface, it is an ordinary node.  A flex line from $\Flex(X)$ containing $q$ is a tangent line to $P_q(X)$.  Its projection is a singular point with at least two equal branches.

We use the following theorem due to Valentine and Viktor Kulikov \cite[Theorem~0.1]{Kulikov}. 

\begin{theorem}
Assume that $\mathrm{char}(\Bbbk) = 0$ and $X$ has only rational double points as singularities. Let $\pr_q\colon X\to \bbP^2$ be a general projection of\, $X$.  Then, $\pr_q^{*}(B(q)) = 2\Ram(q)+C$, where both $\Ram(q)$ and $C$ are reduced. The projection of a singular point of type $A_n,D_n,E_n$ is a simple singular point of\, $B(q)$ of type $a_n,d_n,e_n$. The other singular points of $B(q)$ are ordinary nodes and ordinary cusps; they are the projections of the bitangents lines and flex lines of\, $X$.
\end{theorem}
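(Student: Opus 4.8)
The statement is local over $\bbP^2$, so the plan is to analyse the germ of $\pr_q$ over each point $P\in B(q)$ and then reassemble. Since $q$ is general and $\deg X=d$, the map $\pr_q\colon X\to\bbP^2$ is finite of degree $d$, and over a general point of $B(q)$ exactly two of the $d$ sheets come together simply. I would first dispose of the open part of $B(q)$ lying away from the images of the singular points of $X$, where $\pr_q$ is a map of smooth surfaces. Here I would reduce to the classification of generic (stable) maps of surfaces in characteristic zero: for general $q$ the only local models are the immersion $(u,v)\mapsto(u,v)$, the fold $(u,v)\mapsto(u,v^2)$, and the cusp $(u,v)\mapsto(u,v^3-uv)$. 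The fold locus is exactly $\Ram(q)$; over a fold the branch is smooth and $\pr_q^{*}B(q)=2\Ram(q)$ locally, which is the source of the coefficient $2$, while the remaining $d-2$ sheets are immersed and contribute the reduced residual curve $C=\overline{\pr_q^{-1}(B(q))\setminus\Ram(q)}$. Two distinct folds over one point force the two smooth branches of $B(q)$ to cross transversally, producing an ordinary node, which by the discussion preceding the theorem is the image of a bitangent line through $q$; a cusp model produces an ordinary cusp of $B(q)$, the image of a flex line from $\Flex(X)$.

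The heart of the argument is the local analysis at a singular point $p_0$ of $X$, and the key observation is that every rational double point is literally a double point: $\mathrm{mult}_{p_0}X=2$. Hence a general line through $p_0$ meets $X$ there with multiplicity exactly $2$, and after discarding the $d-2$ sheets that pass near $p_0$ without colliding there, the germ of $\pr_q$ at $p_0$ is finite of degree $2$, i.e.\ a double cover of the smooth germ $(\bbA^2,0)$. By Weierstrass preparation this germ is cut out by a monic $w^2+A(u,v)w+B(u,v)$, and in characteristic zero completing the square presents it as $\{w^2=D(u,v)\}$ with $D=\tfrac14(A^2-4B)$ and branch curve $\{D=0\}$. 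Now I would invoke the hypothesis that the total space of this germ is the rational double point of type $A_n$, $D_n$, or $E_n$, together with the suspension correspondence: $\{w^2=D(u,v)\}$ has an isolated surface singularity of type $A_n$ (resp.\ $D_n$, $E_n$) if and only if $\{D=0\}$ has a plane-curve singularity of type $a_n$ (resp.\ $d_n$, $e_n$) --- a bijection of Dynkin labels, as one checks on the normal forms (for instance $w^2=u^2+v^{n+1}$ is $A_n$ while $u^2+v^{n+1}$ is $a_n$, and $w^2=-z(y^2+z^{n-2})$ is $D_n$ while $z(y^2+z^{n-2})$ is $d_n$). This forces $B(q)=\{D=0\}$ to have exactly the asserted simple singularity at $\pr_q(p_0)$; it is automatically reduced, since a nonreduced $D$ would give the total space a non-isolated singularity, and the same remark shows $\Ram(q)$ is the reduced ramification divisor $\{w=0\}$ of the cover.

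Assembling the two analyses yields $\pr_q^{*}B(q)=2\Ram(q)+C$ with $\Ram(q)$ and $C$ reduced, the singularities of $B(q)$ being precisely the simple singularities $a_n,d_n,e_n$ over the singular points of $X$ and the ordinary nodes and cusps over the smooth locus, identified with the bitangent and flex lines through $q$. In each local model the one point demanding care is the genericity of the projection direction $\overline{q\,p_0}$: one must check that for general $q$ this direction is transverse to the tangent cone of $X$ at $p_0$, so that the local degree really is $2$ and no extra sheet is absorbed into the germ, and, over the smooth locus, that the jets of $\pr_q$ never degenerate beyond fold and cusp. I expect this genericity verification --- rather than the ADE bookkeeping --- to be the main obstacle, and it is exactly where the characteristic-zero hypothesis is used, both in the classification of stable surface maps and in the completion of the square that linearises the deck involution.
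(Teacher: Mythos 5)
The paper does not actually prove this statement: it is quoted verbatim from Val.\ Kulikov and Vik.\ Kulikov \cite[Theorem 0.1]{Kulikov}, so there is no internal proof to compare against, and your proposal has to stand as an independent argument. Its architecture is the natural (and, as far as the ideas go, the standard) one: away from the singular points, reduce to the local models of a stable map of smooth surfaces (immersion, fold, cusp), which accounts for the coefficient $2$ on $\Ram(q)$, the reduced residual curve $C$, and the nodes and cusps of $B(q)$ coming from bitangent and flex lines; at a rational double point, use $\mathrm{mult}_{p_0}X=2$ to present the germ as a double cover of $(\bbA^2,0)$ via Weierstrass preparation and completion of the square, and then invoke the suspension correspondence $A_n\leftrightarrow a_n$, $D_n\leftrightarrow d_n$, $E_n\leftrightarrow e_n$, with reducedness of the discriminant forced by normality. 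All of that bookkeeping is correct.

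The genuine gap is the one you yourself flag and then set aside: the genericity assertions are not a routine verification to be appended at the end; they are the actual content of the theorem, and your proof defers exactly the part that is hard. First, you cannot obtain stability of $\pr_q$ over the smooth locus by ``reducing to the classification of stable maps'': the classification tells you what the stable germs are, but the maps available here form only the $3$-dimensional family of linear projections from centers $q\in\bbP^3$, so jet-transversality arguments from the $C^\infty$ theory do not apply off the shelf. One must instead run explicit dimension counts on incidence varieties (lines through $q$ tangent at three or more points, tangencies of order $\ge 4$, pairs of tangencies whose images have a common tangent direction, etc.); this is precisely what occupies the cited Kulikov--Kulikov paper. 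Relatedly, your claim that ``two distinct folds over one point \emph{force} the two smooth branches of $B(q)$ to cross transversally'' is false as stated --- two fold branches can be tangent (producing a tacnode) for special $q$, and ruling this out is again a genericity count, not a consequence of the local models. Second, at a singular point $p_0$ of $X$ the genericity you list (transversality of $\la q,p_0\ra$ to the tangent cone) is not enough: you must also show that for general $q$ the line $\la q,p_0\ra$ is not tangent to $X$ at any \emph{other} point and meets no other singular point. Otherwise the remaining $d-2$ sheets contribute extra branches of $B(q)$ through $\pr_q(p_0)$, and the singularity there would be the union of the $a_n/d_n/e_n$ germ with those branches, hence not of the asserted type. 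Until these counts are carried out, the argument is a correct strategy but not a proof.
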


\begin{corollary}\label{bidegrees2}
Assume that $\mathrm{char}(\Bbbk) = 0$ and $X$ is smooth or has only ordinary double points. Then, the bidegrees of\, $\Bit(X)$ and $\Flex(X)$ are equal to $(\half d(d-1)(d-2)(d-3),\half d(d-2)(d^2-9))$ and $(d(d-1)(d-2),3d(d-2))$, respectively.
\end{corollary}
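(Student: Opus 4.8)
The plan is to treat the class and the order separately, and within the order to reduce to the smooth case of Theorem~\ref{bidegrees} by tracking the contribution of the ordinary double points through the quoted theorem of Kulikov. First the class: since the singularities of $X$ are isolated, a general plane $H$ avoids them, so $H\cap X$ is a smooth plane curve of degree $d$. By definition the class of $\Bit(X)$ (resp.\ $\Flex(X)$) is the number of bitangent (resp.\ flex) lines lying in $H$, i.e.\ the number of bitangents (resp.\ flexes) of this smooth plane curve, namely the classical $\half d(d-2)(d^2-9)$ and $3d(d-2)$. These are manifestly insensitive to the singularities of $X$, so both classes are settled at once.

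For the orders I would fix a general center $q$ and use $\pr_q\colon X\to\bbP^2$ with Kulikov's theorem. By that theorem the only singularities of the branch curve $B(q)$, a plane curve of degree $d(d-1)$, are the nodes $a_1$ coming from the ordinary double points of $X$, ordinary nodes coming from the bitangents through $q$, and ordinary cusps coming from the flexes through $q$. Writing $\delta$ for the number of nodes of $X$, $m$ for the order of $\Bit(X)$, and $\kappa$ for the order of $\Flex(X)$, and using that $B(q)$ is irreducible with each node and cusp contributing $1$ to the $\delta$-invariant, the plane-curve genus formula gives
$$p_a(B(q))-p_g(B(q))=\delta+m+\kappa,$$
where $p_a(B(q))=\binom{d(d-1)-1}{2}$ and $p_g(B(q))=p_g(\Ram(q))$, since $\pr_q$ maps $\Ram(q)$ birationally onto $B(q)$.

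The two remaining inputs both come from a local analysis at a node of $X$, and this is where the real work lies. For $\kappa$: the flexes through $q$ are cut on $X$ by $P_q(X)\cap P_{q^2}(X)$, and the key point is that although a node lies on the first polar $P_q(X)$ (all first partials vanish), it does \emph{not} lie on the second polar, because $P_{q^2}(F)$ at the node equals $q^\top\,\mathrm{Hess}(F)\,q$, which is nonzero for general $q$ as the Hessian has rank $3$ at an ordinary double point. Hence the nodes contribute nothing to $X\cap P_q(X)\cap P_{q^2}(X)$ and $\kappa=d(d-1)(d-2)$, exactly as in the smooth case. For $p_g(\Ram(q))$: near a node $X$ is a rank-$3$ quadric cone while $P_q(X)$ is smooth through the vertex (its linear part there is a nonzero linear form for general $q$), so $\Ram(q)$ acquires an ordinary node, a general plane through the vertex meeting the cone in two distinct lines. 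Passing to the minimal resolution $\pi\colon\tilde X\to X$, which is crepant with disjoint $(-2)$-curves $E_i$, one finds $\pi^*\Ram(q)=\tilde R+\sum_i E_i$ and then, by adjunction on $\tilde X$,
$$p_g(\Ram(q))=p_a(\tilde R)=1+\tfrac12 d(d-1)(2d-5)-\delta.$$
Substituting both facts into the $\delta$-invariant identity, the two occurrences of $\delta$ cancel, giving $m+\kappa=\tfrac12 d(d-1)^2(d-2)$ and hence $m=\tfrac12 d(d-1)(d-2)(d-3)$.

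The main obstacle is precisely this local bookkeeping at the nodes: one must check that each node of $X$ produces exactly one extra node of $B(q)$ while simultaneously lowering $p_g(\Ram(q))$ by exactly one, so that the two effects cancel and leave the bitangent count unchanged, and one must confirm that no node enters the flex intersection $X\cap P_q(X)\cap P_{q^2}(X)$. Once this cancellation is verified, the smooth subcase $\delta=0$ is Theorem~\ref{bidegrees} with equality now established, and the case of ordinary double points follows with identical numerology.
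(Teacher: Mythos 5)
Your overall framework --- project from a general point $q$, invoke Kulikov's theorem to control the singularities of the branch curve $B(q)$, and compare genera of $B(q)$ and $\Ram(q)$ so that the contribution of the nodes of $X$ cancels --- is exactly the paper's, and most of your bookkeeping is correct: the class computation, the fact that $\Ram(q)$ acquires one ordinary node per node of $X$ so that $p_g(\Ram(q))=1+\half d(d-1)(2d-5)-\delta$, and the resulting identity $m+\kappa=\half d(d-1)^2(d-2)$. The genuine gap is in how you close the argument: you assert $\kappa=d(d-1)(d-2)$ outright, justified only by the (correct) observation that a node of $X$ does not lie on $P_{q^2}(X)$ for general $q$, ``exactly as in the smooth case.'' But that equality is not available in the smooth case either: the discussion following Theorem \ref{bidegrees} only identifies $d(d-1)(d-2)$ as the Bezout number of $X\cap P_q(X)\cap P_{q^2}(X)$, i.e.\ it yields the upper bound $\kappa\le d(d-1)(d-2)$. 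To promote this to an equality directly, you would additionally need that this zero-dimensional scheme is reduced and that its points are in bijection with the flex lines through $q$ (no line carrying two points of contact of order $3$, no contact of order $4$); your local analysis at the nodes says nothing about any of this. As written the step is circular, since the exact flex count in the smooth case is itself one of the conclusions being proved.

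The repair is short, and it is precisely what the paper does: keep your identity
$$m+\kappa=\tfrac12 d(d-1)(d-2)(d-3)+d(d-1)(d-2),$$
and combine it with the \emph{two} upper bounds $m\le\half d(d-1)(d-2)(d-3)$ (Theorem \ref{bidegrees}, whose discriminant argument still applies since for general $q$ no bitangent through $q$ meets a node) and $\kappa\le d(d-1)(d-2)$ (the Bezout bound above). Since the two upper bounds sum exactly to the right-hand side, both must be equalities, and no transversality or reducedness ever needs to be checked. With that substitution your proof coincides with the paper's; note also that your local computation at the nodes of the second polar, while correct, becomes unnecessary, because the upper bound on $\kappa$ holds whether or not the nodes lie in the triple intersection.
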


\begin{proof}
We choose $q$ general enough such that no bitangent or flex line through $q$ passes through a singular point of $X$.  The ramification curve $\Ram(q)$ of the projection $\pr_q$ is equal to the intersection $X\cap P_q(X)$ of~$X$ with its polar with respect to $q$. We know that the Jacobian ideal of an ordinary double point $x\in X$ is the maximal ideal $\mathfrak{m}_{X,x}$. This easily implies that $P_q(X) = V(\sum_{i=0}^3a_i\frac{\partial F_d}{\partial x_i})$ is smooth at $x$ and that under the projection $\pr_q$, the double points of $\Ram(q)$ are mapped bijectively to ordinary double points of $B(q)$ different from the nodes and cusps coming from bitangent and flex lines.

Since the curve $\Ram(q)$ is a complete intersection of degrees $(d,d-1)$, its arithmetic genus $p_a$ is equal to $ 1+\half d(d-1)(2d-5)$. The projection $\pr_q$ defines a birational isomorphism between the curves $\Ram(q)$ and $B(q)$. The geometric genus $g$ of $B(q)$ is equal to $\half (d(d-1)-1)(d(d-1)-2)-\delta-\kappa -\delta_0$, where $\delta$ is the number of bitangent lines, $\kappa$ is the number of flex lines, and $\delta_0$ is the number of nodes of $\Ram(q)$.  This gives an inequality
\begin{align*}
p_a &= 1+\half d(d-1)(2d-5)\\
&= \half (d(d-1)-1)(d(d-1)-2)-\half d(d-1)(d-2)(d-3)-d(d-1)(d-2)\\
&\ge g \ge \half (d(d-1)-1)(d(d-1)-2)-\delta-\kappa
\end{align*}
that implies the inequality
\begin{equation}\label{ineq1}
\delta+\kappa \ge \half d(d-1)(d-2)(d-3)+d(d-1)(d-2).
\end{equation}
Since $\delta$ is equal to the number of bitangents of $X$ dropped from $q$, we obtain that $\delta$ is the order of $\Bit(X)$. Similarly, we get that $\kappa$ is the order of the $\Flex(X)$. Applying Propositions~\ref{bidegrees} and~\ref{flexes}, we obtain $\delta \le \half d(d-1)(d-2)(d-3)$ and $\kappa\le d(d-1)(d-2)$. It follows from \eqref{ineq1} that $\delta = \half d(d-1)(d-2)(d-3)$ and $\kappa = d(d-1)(d-2)$.

In this proof, we silently assumed that the curves $\Ram(q)$ and $B(q)$ are irreducible.  One can treat the reducible case in a similar manner by dividing the bitangents and flex lines into subsets corresponding to irreducible components of the curves.  We leave it to the reader to finish the proof.
\end{proof}

\section{Quartic surfaces: \texorpdfstring{$\boldsymbol{p\ne 2}$}{p different from 2}}\label{sec2}

In the special case $d = 4$, we expect that, for a smooth or nodal quartic surface in characteristic zero, the bidegree $(m,n)$ of $\Bit(X)$ is equal to $(12,28)$.  Let $S$ be an irreducible component of $\Bit(X)$, and let $q_S\colon Z_S\to S$ be the restriction of the tautological line bundle $q\colon Z\to G_1(\bbP^3)$ to the congruence $S$. The pre-image $\tilde{X}$ of $X$ under the projection $p_S\colon Z_S\to \bbP^3$ is a double cover of $S$ defined by the projection $q_S$:
$$
\xymatrix{&Z_S\ar[dl]_{p_S}\ar[dr]^{q_S}&\\
\bbP^3&&S\rlap{.}}
$$
Thus, any irreducible component $S$ of $\Bit(X)$ with non-zero order and non-zero class defines a birational involution of the surface $\tilde{X}$ with quotient isomorphic to $S$.  In the case when the projection $p_S$ restricts to a birational isomorphism $\tilde{X}\to X$, we obtain a birational involution $\sigma_S$ of $X$.  The set of fixed points of $\sigma_S$ is equal to the pre-image of the locus of points on $X$ such that there exists a line intersecting $X$ at this point with multiplicity $4$.

\begin{example}
  It follows from Kummer's classification of congruences of order $2$ without fundamental curves that the moduli space of quartic surfaces with $16\ge \mu \ge 11$ nodes contains an irreducible component such that a general surface from this component has reducible congruence $\Bit(X)$.  Some of the irreducible components are confocal congruences, \textit{i.e.}, congruences of the same bidegree that share the same focal surface (for example, as we will explain below, the first case in the following Table~\ref{table} is a quartic del Pezzo surface).  The numbers of irreducible components and their bidegrees are given in Table~\ref{table} below (see \cite{Kummer}, \cite[Artikel~348]{Sturm}).
  
\begin{table}[ht]
 \begin{center}
 \begin{tabular}{|c|c|c|}
   \hline  
 $\mu$&Bidegree&Number\\ \hline 
  $16$&$(2,2)$&6\\
 &$(0,1)$&16\\
\hline
 $15$&$(2,3)$&6\\
 &$(0,1)$&10\\ \hline
 $14$&$(2,4)$&4\\
 &$(0,1)$&6\\ 
 &$(4,6)$&1\\ \hline
 $13$&$(2,5)$&3\\
 &$(0,1)$&3\\ 
&$(6,10)$&1\\ \hline
 $12_{\rm I}$&$(2,6)$&2\\
&$(0,1)$&1\\
&$(8,15)$&1\\ \hline
$12_{\rm II}$&$(2,6)$&3\\
&$(6,10)$&1\\ \hline
$11$&$(2,7)$&1\\ 
&$(10,21)$&1\\ \hline
\end{tabular}
\caption{Irreducible components of $\mathrm{Bit}(X)$}\label{table}
\end{center}
\end{table}
\end{example}

The first case is a Kummer quartic surface $X$.  It follows that $X$ admits six involutions with quotient isomorphic to a congruence of bidegree $(2,2)$.  This is a quartic del Pezzo surface embedded in $G_1(\bbP^3)$ via its anti-canonical linear system.  Each pair from the six congruences intersects at four points. The branch curve has $28 = 16+12$ ordinary nodes and $24$ cusps.  Its geometric genus is equal to $3$. The ramification curve $\Ram(q)$ is a curve on $X$ of degree $12$ with $16$ nodes; its geometric genus is also equal to $3$.

We have already mentioned in the introduction that a Kummer surface admits $16$ planes cutting the surface along a smooth conic.  Recall that a Kummer quartic surface is the quotient of the Jacobian of a smooth curve of genus $2$ by the inversion automorphism.  The sixteen double conics (tropes) are the images of the theta divisor and its translations by sixteen $2$-torsion points.  This is a classical, well-known fact that can be found in any treatment of Kummer surfaces (see, for example, \cite[Section~10.3]{DoC}, \cite{Hudson1}).  Let us explain how to see the six irreducible components of bidegree $(2,2)$.  Recall that $X$ admits a smooth model~$\tilde{X}$ as a surface of degree $8$ in $\bbP^5$ (see \cite[Section~10.3.3]{DoC}).  The surface $\tilde{X}$ is a complete intersection of three quadrics in $\bbP^5$ which is a $K3$ surface. One can choose projective coordinates such that $\tilde{X}$ is given by the following equations:
\begin{equation}
\sum_{i=1}^6 z_i^2 = \sum_{i=1}^6 a_iz_i^2 = \sum_{i=1}^6 a_i^2 z_i^2 =0,
\end{equation}
where $a_1,\ldots, a_6$ are distinct constants. The projective transformations
$$
[z_1,z_2,z_3,z_4,z_5,z_6]\longmapsto [\pm z_1,\pm z_2, \pm z_3, \pm z_4, \pm z_5, \pm z_6]
$$
induce automorphisms of $\tilde{X}$ which generate a $2$-elementary group of order $2^5$. The group of such automorphisms contains a subgroup of index $2$ that consists of the identity and $15$ projective involutions that restrict to symplectic automorphisms of $\tilde{X}$.\footnote{This means that the involution leaves invariant a non-zero regular $2$-form on the surface, or, equivalently, the set of its fixed points is non-empty and finite.}  They are the involutions that change two or four signs at the coordinates, and non-symplectic involutions are the ones that change one, three, or five signs.  The quotient surface of $\tilde{X}$ by a symplectic automorphism is a K3 surface with rational double points.  The remaining $16$ involutions are decomposed into two sets.  One set consists of ten involutions that change exactly three signs at the coordinates.  They are fixed-point-free with  orbit space isomorphic to an Enriques surface.  Another set consists of six involutions $g_i$ that change only one coordinate, with  orbit space isomorphic to a del Pezzo surface of degree $4$. These are the involutions we are interested in.

The first quadric $V(\sum z_i^2)$ can be identified with the Grassmannian quadric $G_1(\bbP^3)$ written in Klein coordinates $z_i$ corresponding to six apolar line complexes (see, for example, \cite[Section~10.2]{DoC}). It is known that any automorphism of $G_1(\bbP^3)$ comes from a projective collineation or correlation. The involutions $g_i$ preserve the apolar line complexes $V(z_i)$ and, hence, come from a correlation. They transform points to planes and, therefore, act on $G_1(\bbP^3)$ by transforming the plane of lines through a point (an $\alpha$-plane) to the plane of lines contained in a given plane (a $\beta$-plane).

\begin{lemma}\label{octicmodel}
Let $g_i$ be considered as a birational involution of\, $X$.  Then, the closure of lines spanned by the orbits of $g_i$ is an irreducible component of\, $\Bit(X)$.
\end{lemma}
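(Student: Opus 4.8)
The plan is to identify the closure of the orbit-spanning lines with the image of the del Pezzo quotient $S_i:=\tX/g_i$ under its anticanonical embedding into $\bbG$, and then to show that the general line of $S_i$ is bitangent to $X$. Granting these two points, the component claim is immediate: $S_i$ is irreducible (a quotient of the irreducible $\tX$) and two-dimensional, while $\Bit(X)$ is a surface in $\bbG$ (of bidegree $(12,28)$ by Corollary \ref{bidegrees2}), and an irreducible surface contained in $\Bit(X)$ is necessarily one of its irreducible components. Its bidegree is then $(2,2)$ because the anticanonical del Pezzo surface has Pl\"ucker degree $m+n=4$, matching Table \ref{table}.

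First I would make the rational map precise. The involution $g_i$ descends from $\tX$ to a birational involution $\sigma_i$ of $X$, and for general $x\in X$ the orbit $\{x,\sigma_i(x)\}$ consists of two distinct points spanning a line $\ell_x:=\la x,\sigma_i(x)\ra$. The assignment $x\mapsto\ell_x$ factors through $X/\sigma_i$ and defines a rational map $X\dashrightarrow\bbG$ that is generically two-to-one onto its image; let $S_i$ denote the closure of that image. The point to verify is that $S_i$ coincides with the anticanonically embedded del Pezzo $\tX/g_i\hookrightarrow\bbG$, i.e.\ that the line $\ell(\tilde x)\in\bbG$ attached to a lift $\tilde x$ under $\tX\to\tX/g_i\hookrightarrow\bbG$ equals $\ell_x$. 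Concretely this reduces to the incidence relation $x\in\ell(\tilde x)$; by the $g_i$-invariance of $\ell(\tilde x)$ one then also gets $\sigma_i(x)\in\ell(\tilde x)$, and two distinct points determine the line, so $\ell(\tilde x)=\ell_x$. I would establish this incidence from the Klein-coordinate model $\bbG=V(\sum z_i^2)$, using that $g_i$ acts on $\bbG$ through a correlation carrying $\alpha$-planes to $\beta$-planes, as recorded just above the lemma.

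The geometric heart is the bitangency. Since $x$ and $\sigma_i(x)$ already lie on $\ell_x\cap X$ and $\deg X=4$, it suffices to prove that $\ell_x$ is tangent to $X$ at $x$; the relation $\sigma_i(\ell_x)=\ell_x$ then yields tangency at $\sigma_i(x)$ as well, so that $\ell_x\cap X=2x+2\sigma_i(x)$ and $\ell_x$ is genuinely bitangent. Tangency at $x$ is the assertion $\ell_x\subset T_xX$. The incidence of the previous step exhibits $\tX$, via $\tilde x\mapsto(x,\ell(\tilde x))$, as a degree-two subcover $\Sigma$ of the degree-four cover $p_{S_i}^{-1}(X)\to S_i$ in the tautological diagram; bitangency is exactly the statement that the residual degree-two cover coincides with $\Sigma$, equivalently that the intersection multiplicity of $\ell_x$ with $X$ at $x$ equals two. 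More conceptually, $S_i$ is a congruence of bidegree $(2,2)$ whose focal surface is $X$, and a general line of such a congruence touches its focal surface precisely at its two foci, here $x$ and $\sigma_i(x)$; this pins down the tangency.

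I expect the incidence-plus-tangency step to be the main obstacle, everything else being formal. The cleanest route is a direct computation in Klein coordinates: write $\ell(\tilde x)$ for $\tilde x=[z_1,\dots,z_6]\in\tX$ via the correlation underlying $g_i$, verify the Pl\"ucker incidence for the image point $x\in\bbP^3$, and compute that $\ell_x$ meets $X$ at $x$ with multiplicity two. Once the general orbit-line is shown bitangent, the closure $S_i$ lies in $\Bit(X)$, is irreducible of dimension two, and is therefore an irreducible component of $\Bit(X)$, as claimed.
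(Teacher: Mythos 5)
There is a genuine gap: the geometric heart of the lemma --- that a general orbit line $\ell_x=\la x,\sigma_i(x)\ra$ is tangent to $X$ at both $x$ and $\sigma_i(x)$ --- is never actually proved. Of the two substitutes you offer, the ``more conceptual'' one is circular: the fact that a general line of a congruence touches its focal surface at its two foci is only useful if you already know that the focal surface of $S_i$ \emph{is} $X$, and that is precisely the tangency assertion in question. The other route, ``a direct computation in Klein coordinates,'' is deferred rather than carried out, and it is exactly the hard step. There is also a flaw in your framing: you posit an a priori ``anticanonical embedding $\tX/g_i\hookrightarrow\bbG$'' with attached lines $\ell(\tilde x)$, but no such embedding exists independently of the construction you are trying to analyze. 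The natural anticanonical model of $\tX/(g_i)$ is the projection of $\tX\subset\bbP^5$ from the isolated fixed point of $g_i$ into the fixed hyperplane $H_i=V(z_i)\cong\bbP^4$, cut out there by $\sum_{j\ne i}(a_j-a_i)z_j^2=\sum_{j\ne i}(a_j^2-a_i^2)z_j^2=0$; this surface is \emph{not} contained in the Grassmannian quadric, since otherwise $\sum_{j\ne i}z_j^2$ would be a linear combination of those two quadrics, i.e.\ the polynomial $\lambda(t-a_i)+\mu(t^2-a_i^2)$ of degree $\le 2$ would equal $1$ at the five distinct values $t=a_j$, $j\ne i$, hence identically, contradicting its vanishing at $t=a_i$. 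So the map $\tX/(g_i)\to\bbG$ only comes into existence through the assignment $x\mapsto\ell_x$ itself, and your ``identification'' step has nothing independent to compare it with.

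The paper's proof supplies the missing geometry by a short synthetic argument using exactly the correlation structure you mention but do not exploit. Since $g_i$ comes from a correlation, it sends the $\alpha$-plane $\Omega(x)$ (lines through $x$) to a $\beta$-plane $\Omega(\Pi')$ (lines in a plane $\Pi'$), and sends $\Omega(\Pi)$, where $\Pi=\bbT_x(X)$, to some $\Omega(x')$; birationality of $g_i$ on $X$ gives $x'\in X$ and $\Pi'=\bbT_{x'}(X)$. The key point is that every point of the hyperplane $H_i$ is fixed by $g_i$: the nonempty set $\Omega(x)\cap H_i$ is therefore fixed pointwise, hence contained in $g_i(\Omega(x))=\Omega(\Pi')$, so $\Omega(x)\cap\Omega(\Pi')\ne\emptyset$, which forces $x\in\Pi'$; symmetrically $x'\in\Pi$. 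Then $x,x'\in\Pi\cap\Pi'$, so the line $\la x,x'\ra$ lies in both tangent planes and is bitangent to $X$ at $x$ and $x'$. Your surrounding formal steps are fine --- the reduction by symmetry to tangency at one point (modulo the abuse of writing $\sigma_i(\ell_x)$, since $\sigma_i$ does not act on $\bbP^3$), and the conclusion that an irreducible two-dimensional closed subvariety of the surface $\Bit(X)$ is a component --- but without an argument like the one above, the bitangency itself, and hence the lemma, remains unproven.
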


\begin{proof}
First, we recall the relation between $X$, $\tilde{X}$, and the quadratic line complex $Y = V(\sum a_iz_i^2)\cap G_1(\bbP^3)$ (see, \textit{e.g.}, \cite[Section~10.3.3]{DoC}).  For a point $x\in \bbP^3$, let $\Omega(x) \subset G_1(\bbP^3)$ be the $\alpha$-plane consisting of lines passing through $x$, and for a plane $\Pi \subset \bbP^3$, denote by $\Omega(\Pi)$ the $\beta$-plane consisting of lines lying on $\Pi$.  The Kummer quartic surface $X$ is the set of $x\in \bbP^3$ such that $\Omega(x)\cap Y$ is a singular conic in $\Omega(x)$.  The surface $\tilde{X}$ sits in $G_1(\bbP^3)$ as a singular surface of the quadratic line complex; that is, it parameterizes the lines that are singular points of the intersection of $Y$ with $\alpha$-planes.  We can identify each point $x\in X$ with the $\alpha$-plane $\Omega(x)$.  An automorphism $g_i$ acts on $X$ by sending $\Omega(x)$ to $\Omega(\Pi)$.  Let $x\in X$ be a general point, and let $\Pi = \bbT_x(X)$ be the embedded tangent space of $X$ at $x$.  As we explained above, $g_i$ interchanges the two families $\{\Omega(x)\}_{x\in \bbP^3}$ and $\{\Omega(\Pi)\}_{\Pi\subset \bbP^3}$ of planes, and hence 
$$
g_i(\Omega(x))=\Omega(\Pi'),\quad g_i(\Omega(\Pi))=\Omega(x')
$$
for some $x' \in \bbP^3$ and $\Pi'\subset \bbP^3$.  Since $g_i$ acts on $X$ as a birational automorphism, $x'\in X$ and $\Pi' = \bbT_{x'}(X)$.  Since any point of $H_i = V(z_i)$ is fixed by $g_i$,
\begin{gather*}
\emptyset \ne \Omega(x)\cap H_i = g_i(\Omega(x)\cap H_i)= \Omega(\Pi')\cap H_i,\\
\emptyset \ne \Omega(\Pi)\cap H_i = g_i(\Omega(\Pi)\cap H_i)= \Omega(x')\cap H_i.
\end{gather*}
These imply that $x\in \Pi'$ and $x'\in \Pi$.  Since the line $\ell = \la x, x'\ra$ satisfies $\ell\subset \Pi$ and $\ell\subset \Pi'$, it is tangent to $X$ at $x$ and $x'$; that is, it is a bitangent line of $X$.  Thus, $\ell$ can be identified with the pair $\{x, x'= g_i(x)\}$, that is, a point of the orbit space $\tilde{X}/(g_i)$.
\end{proof}

The quotient surface $\tilde{X}/(g_i)$ is given by
$$
\sum_{j\ne i} (a_j-a_i)z_j^2 = \sum_{j\ne i} \left(a_j^2-a_i^2\right)z_j^2=0; 
$$
it is a quartic del Pezzo surface $\calD$. 

Also, note that the fixed locus of the involution $g_i$ is a canonical curve $C$ of genus $5$ given as the intersection of three diagonal quadrics.  The curve $C$ has a special group of automorphisms isomorphic to the $2$-elementary group $2^4$. It is classically known as a \emph{Humbert curve} of genus $5$. The image $B$ of $C$ in the quartic del Pezzo surface $\calD$ belongs to $|-2K_{\calD}|$.  The curve $B$ is the branch curve of the double cover $\tilde{X}\to \calD$. It has the distinguished property that any of the $16$ lines on $\calD$ splits into a pair of lines on $\tilde{X}$.

\section{Birational involutions of a quartic surface} \label{involutions}
Let $\sigma$ be a birational involution of a quartic surface $X$. The closure of lines spanned by orbits of $\sigma$ is an irreducible congruence $S(\sigma)$ of lines.  Fix a general point $P\in \bbP^3$; the order of $S(\sigma)$ is equal to the cardinality of the set $\{x\in \bbP^3: P\in \la x,\sigma(x)\ra\}$.

Suppose that $\sigma$ is the restriction of a Cremona involution $T$.  Then, the set of points $x\in \bbP^3$ such that $P\in \la x,T(x)\ra$ is classically called an \emph{isologue} of $T$, and $P$ is its center.  It is given by the condition that
$$
\rank\begin{pmatrix}a&b&c&d\\\
x_0&x_1&x_2&x_3\\
y_0&y_1&y_2&y_3\end{pmatrix} < 3,
$$
where $P = [a,b,c,d], x = [x_0,x_1,x_2,x_3], T(x) = [y_0,y_1,y_2,y_3]$.  It is expected to be a curve of degree $2k+1$, where $k$ is the algebraic degree of $T$; see \cite[Section~IX.21, p.~175]{Hudson2}. However, it also includes the locus of fundamental points of $T$, as well as the closure of the locus of fixed points of $T$.  The class of $S(\sigma)$ is expected to be equal to the degree of $\Pi\cap T(\Pi)\cap X$ for a general plane $\Pi$, which is $4k$.
 
Let $\sigma$ be a birational involution of a quartic surface. The rational map
$$
\phi_\sigma\colon X/(\sigma)\longdashrightarrow G_1\left(\bbP^3\right),\quad x\longmapsto \la x,\sigma(x)\ra
$$ 
is of degree $1$ or $2$, since a general line intersects $X$ with multiplicity $4$.  There are three possible scenarios:
\begin{enumerate}[label=(\roman*), ref=\roman*]
\item\label{scen1} $\phi_\sigma$ is of degree $2$;
\item\label{scen2} $\phi_\sigma$ is of degree $1$, a general line $\ell_x = \la x,\sigma(x)\ra$ intersects $X$ 
at two points and 
two fixed points of $\sigma$;
\item\label{scen3} $\phi_\sigma$ is of degree $1$, a general line $\ell_x$ is a bitangent line of $X$.
\end{enumerate}

In the last case, we say that $\sigma$ is a \emph{bitangent involution}.

Let $S(\sigma)\subset G_1(\bbP^3)$ be the congruence of lines defined as the closure of the image of the rational map $\phi_\sigma$.

\begin{example}
Here, we give an example of an involution of type~\eqref{scen2}.  Let $\sigma$ be the restriction of a projective involution $T$ in $\bbP^3$.  Suppose that $p \ne 2$ and the fixed locus of $T$ is the union of two skew lines $\ell_1,\ell_2$.  It is clear that the lines $\ell_x$ are invariant lines of $T$.  Hence, each $\ell_x$ has two fixed points on it, one on $\ell_1$ and one on $\ell_2$.  This shows that the congruence $S(\sigma)$ is of bidegree $(1,1)$, isomorphic to a smooth quadric.  It is known that the set $X^\sigma$ of fixed points of an involution $\sigma$ of a K3 surface consists of $8$ isolated fixed points if $\sigma$ is a symplectic involution and that it does not have isolated fixed points if $\sigma$ is an anti-symplectic involution. This implies that either $T$ intersects $X$ at no more than eight points (one can show that there are exactly eight intersection points since none of the lines can pass through a singular point; however, we will not need this fact), or $T$ is contained in $X$. In the former case, each line $\ell_x$ has two fixed points, one on $\ell_1$ and another on $\ell_2$. Thus, the degree of $\phi_\sigma$ is equal to~$1$.

In the second case, assume $X$ is smooth and $X^\sigma$ consists of two lines $\ell_1$ and $\ell_2$.  Then, $Y = X/(\sigma)$ is also smooth.  The cover $X\to Y = X/(\sigma)$ is ramified over $\ell_1+\ell_2$, and its branch curve is the union of two disjoint smooth rational curves $C_1, C_2$ with self-intersection $-4$. By the adjunction formula, $|-K_Y| = \emptyset$ and $|-2K_Y| = \{C_1+C_2\}$.  Thus, $|-K_Y| = \emptyset$ and $|-2K_Y|\ne \emptyset$; \textit{i.e.}, $Y$ is a Coble surface obtained by blowing up ten points on a smooth quadric $Q$ (see \cite[Section~9.1]{Enriques2}), the eight intersection points of two nodal quartic curves $\bar{C}_1,\bar{C_2}$ of bidegree $(2,2)$ and the two nodes.  The pre-images of the ten exceptional curves on $Y$ in $X$ are ten invariant lines lying in $X$ that contain infinitely many orbits of $\sigma$.

Here is a concrete example: Let $X = V(F)$, where
$$
F = x_3^3x_0 + x_3^3x_1 + x_3x_0^3 + x_3x_0x_2^2 + x_3x_1^3 + x_0^3x_2 + x_0^2x_1x_2 + x_0x_2^3 + x_1^3x_2 + x_1x_2^3
$$
and $\sigma\colon [x_0,x_1,x_2,x_3]\mapsto [-x_0,-x_1,x_2,x_3]$.  The set of fixed points of $\sigma$ consists of two skew lines $\ell_1 = V(x_0,x_1)$ and $\ell_2 = V(x_2,x_3)$. Plugging in the parametric equation $[(s-t)u_0,(s-t)u_1,(s+t)u_2,(s+t)u_3]$ of a line $\ell_x$, where $x = [u_0,u_1,u_2,u_3]$ is not a fixed point of $\sigma$, we obtain the expression
$$
\left(s^2-t^2\right)\left(F(u_0,u_1,u_2,u_3)s^2+2G(u_0,u_1,u_2,u_3)st+F(u_0,u_1,u_2,u_3)t^2\right),
$$
where 
$$
G= -u_0^3u_2 - u_0^3u_3 - u_0^2u_1u_2 + u_0u_2^3 + u_0u_2^2u_3 + u_0u_3^3 - u_1^3u_2 - u_1^3u_3 + u_1u_2^3 + u_1u_3^3.
$$ 
Thus, the union of lines contained in $X$ and intersecting the skew lines $\ell_1$ and $\ell_2$ is equal to $X\cap V(G')$, where $G'$ is obtained from $G$ by replacing $u_i$ with $x_i$.

We can check (using Maple software) that if $\mathrm{char}(\Bbbk)\ne 5, 643$, the surfaces $V(F)$ and $V(G')$ intersect transversally at ten lines and intersect at the two lines of fixed points with multiplicity $3$. This shows that $Y = X/(\sigma)\to S(\sigma) = Q$ is the blowing down of ten $(-1)$-curves on the Coble surface $Y$.
\end{example} 

\begin{example}
Here is an example of a bitangent involution. We refer the reader to \cite{Cossec} or \cite[Section~7.4]{Enriques2}.  Assume $\cha(\Bbbk) \ne 2$. Let $W$ be a general web of quadrics in $\bbP^3$, and let $\calD(W)\subset W$ be the set of singular quadrics in $W$, classically known as a Cayley quartic symmetric (see \cite[Definition~2.1.1]{Cossec}).  Choose a basis $\{q_0, q_1, \ldots, q_3\}$ of $W$, and let $q(\lambda)=\sum_i\lambda_i q_i \in W$ for $\lambda =(\lambda_i)$ a coordinate of $W$.  Here, we identify a quadric in $\bbP^3$ and a symmetric $4\times 4$-matrix.  Then, $\calD(W)$ is a quartic surface in $W \cong \bbP^3$ defined by $\det(q(\lambda))=0$.  Another attribute of a web of quadrics is the Steinerian (or Jacobian) surface in~$\bbP^3$, the locus of singular points of quadrics from $W$. This surface is also a quartic surface, but it lies in the original~$\bbP^3$ but not in $W$ (see \cite[Section~1.1.6]{DoC}).

A Reye line is a line in $\bbP^3$ which is contained in a pencil from $W$. It is known that the set of Reye lines forms an irreducible congruence $\Rey(W)$ of bidegree $(7,3)$, see \cite[Proposition~3.4.2]{Cossec}, a \emph{Reye congruence}, and each Reye line is bitangent to the Steinerian surface $X$ of $W$.
 
The surface $X$ admits a fixed-point-free involution $\sigma$ such that $S(\sigma) = \Rey(W)$.  The involution is defined by $x\mapsto \cap_{Q\in W}P_x(Q)$, where $P_x(Q)$ is the first polar of $Q$ with pole at $x$.

One can also consider the surface of bitangents $\Bit(\calD(W))$ of the quartic symmetroid $\calD(W)$. The generality assumption on $W$ implies that $\calD(W)$ does not contain lines.  So, if $\cha(\Bbbk) = 0$, Corollary~\ref{bidegrees2} implies that the bidegree of $\Bit(\calD(W))$ is equal to $(12,28)$.  It follows from \cite[Theorem~7.4.7]{Enriques2} that it is an irreducible surface with normalization isomorphic to the Reye congruence $\Rey(W)$.  So, $\Rey(W)$ admits two birational models as congruences of bidegrees $(7,3)$ and $(12,28)$.
\end{example}

A point of a congruence of lines is called a fundamental point (in classical terminology, a singular point) if the set of rays passing through this point contains a $1$-dimensional component.  In our case of the congruence $\Bit(X)$, a fundamental point must be a singular point of $X$. For example, if $X$ is a Kummer quartic, the rays through its singular point sweep a trope.  It is classically known that all rays of a congruence of lines of bidegree $(m,n)$ with only isolated fundamental points are tangent to the focal surface $\Phi$ of the congruence of degree $2m+2g-2$, where $g$ is the genus of a general hyperplane section of the congruence (see \cite{Sturm,J,DoReid}).  Since $m = 12$, the degree is at least $22$. However, $\Phi$ could be reducible, and $X$ is its irreducible component. So, the existence of a bitangent involution on $X$ implies that the focal surface of $\Bit(X)$ is reducible.
 
Table~\ref{table} gives examples of families of quartic surfaces which admit bitangent involutions with one irreducible component of order $2$.

It is clear that, if a quartic surface $X$ admits a bitangent involution $\sigma$, then the congruence of lines $S(\sigma)$ is an irreducible component of $\Bit(X)$.

\begin{remark}
As we explained, any birational involution $\sigma$ of a quartic surface $X$ defines an irreducible congruence of lines $S(\sigma)$.  If the order of $S(\sigma)$ is equal to~$1$, the involution $\sigma$ lifts to a birational involution~$T$ of $\bbP^3$. Indeed, given a general point $P$ in $\bbP^3$, there is a unique line passing through $P$ and spanned by an orbit of two points $x,\sigma(x)$ on $X$. We define $T(P)$ to be the fourth point on this line such that the pairs $(x,\sigma(x))$ and $(P,T(P))$ are harmonically conjugate.
\end{remark}

\section{Quartic surfaces: \texorpdfstring{$\boldsymbol{p = 2}$}{p=2}}\label{sec3}
In this section, we assume that $\cha(\Bbbk) =2$. We continue to assume that $X$ is a normal quartic surface.

It is obvious that, in characteristic $p\ne 2$, the bitangent surface of a normal quartic surface $X$ does not contain $\alpha$-planes (otherwise, the projection of $X$ from some point is ramified at every point).  This is not true anymore  in characteristic $p = 2$.

A point $x$ in $\bbP^3$ is called an \emph{inseparable projection center} of a normal surface $X$ if the projection map with  center at $x$ is inseparable. It is clear that the set of lines passing through an inseparable projection center is an $\alpha$-plane contained in $\Bit(X)$. Conversely, if $\Bit(X)$ contains an $\alpha$-plane of lines through a point $x\in \bbP^3$, the point $x$ is an inseparable projection center of $X$.

\begin{proposition}
The set of inseparable projection centers of a normal quartic surface in characteristic $2$ is finite. Any inseparable projection center contained in $X$ is a singular point of $X$.
\end{proposition}

\begin{proof}
  Since the set of points $x\in X$ such that the line $\la q,x\ra$ is tangent at $x$ is equal to the intersection $X\cap P_q(X)$, see \cite[Theorem~1.1.5]{DoC}, a point $q= [a_0,a_1,a_2,a_3]$ is an inseparable center of a quartic surface $X = V(F)$ if and only if the polar $P_q(X) = V(\sum a_i\frac{\partial F}{\partial x_i})$ is equal to $\bbP^3$. Suppose the set of inseparable projection centers contains a curve $Z$ of degree at least~$2$ (not necessarily irreducible). Then, we can choose three non-collinear points on it. Choose projective coordinates such that these points are $[1,0,0,0],[0,1,0,0]$, and $[0,0,1,0]$. Then, the partials $\frac{\partial F}{\partial x_i}$ are equal to $0$ for $i= 0,1,2$.  This means that $F$ contains $x_0,x_1,x_2$ in even power.  But, then, it must contain $x_3$ in even power. Hence, $F$ is a square. So, $Z$ must be a line. In this case, $\Bit(X)$ contains a special hyperplane section of the Grassmannian $G_1(\bbP^3)$ that consists of lines intersecting $Z$. A general plane section of $X$ intersects $Z$ and, hence, contains a curve of bitangent lines. However, a smooth quartic curve contains only finitely many bitangents (in fact, at most seven; see Proposition~\ref{wall}).
  Thus, a general plane section of $X$ is singular, contradicting the Akizuki--Bertini theorem (see \cite[Remark~8.18.1]{Hartshorne}).  This proves that the set of inseparable centers is finite.
 
Suppose $q\in X$ is an inseparable projection center. Then, we may choose projective coordinates such that $q = [0,0,0,1]$ and the equation of $X$ can be written in form $\sum A_{4-k}(x,y,z)w^k = 0$, where $A_i$ is a homogeneous polynomial in $x, y, z$ of degree $i$.  Since $q\in X$, we have $A_0 = 0$, and since $P_q(X) = 0$, we have $A_1 = 0$. Thus, $q$ is a singular point.
\end{proof}

We can extend the proof of Proposition~\ref{bidegrees} to the case of characteristic $2$ to obtain the following corollary.

\begin{corollary}
The variety $\Bit(X)$ is a congruence of lines; \textit{i.e.}, each irreducible component of\, $\Bit(X)$ is a surface.
\end{corollary}

Recall that a universal binary form of degree $d$ is a homogeneous polynomial of degree $d$ in two variables whose coefficients are algebraically independent over a field $k$..
 
\begin{proposition}\label{discri}
Assume $\mathrm{char}(\Bbbk) = 2$. Then, the discriminant $\calD(d)$ of the universal binary form of degree $d$ is a square of a homogeneous polynomial of degree $d-1$.
\end{proposition}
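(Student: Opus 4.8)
The plan is to reduce everything to the elementary remark that in characteristic $2$ an alternating polynomial is the same thing as a symmetric one, so that the Vandermonde determinant in the roots of the form becomes a polynomial in the coefficients whose square is the discriminant. First I would work with the generic monic form: introduce indeterminates $\alpha_1,\dots,\alpha_d$, form $f=\prod_{i=1}^{d}(u-\alpha_i v)$, and let $e_1,\dots,e_d$ be the elementary symmetric functions of the $\alpha_i$, so that the coefficients are $a_k=(-1)^k e_k$, which in characteristic $2$ are simply $a_k=e_k$ (with $a_0=1$). By definition the discriminant of $f$ is $\prod_{i<j}(\alpha_i-\alpha_j)^2=V^2$, where $V=\prod_{i<j}(\alpha_i-\alpha_j)$ is the Vandermonde determinant.

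The crux is that $V$ is alternating: every transposition of the $\alpha_i$ sends $V$ to $-V$. In characteristic $2$ we have $-V=V$, so $V$ is fixed by all of $\frakS_d$, i.e. it is symmetric. By the fundamental theorem of symmetric functions, valid over any commutative ring and in particular over $\bbF_2$, there is a polynomial $\bar P$ with $V=\bar P(e_1,\dots,e_d)=\bar P(a_1,\dots,a_d)$. Hence the discriminant of the monic form equals $\bar P(a_1,\dots,a_d)^2$, a square.

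To pass to a general form I would homogenize. The universal discriminant $\calD(d)(a_0,\dots,a_d)$ is homogeneous of degree $2(d-1)$ and isobaric of weight $d(d-1)$ (weighting $a_k$ by $k$), and setting $a_0=1$ returns the monic discriminant just computed; therefore $\calD(d)=a_0^{2d-2}\,\bar P(a_1/a_0,\dots,a_d/a_0)^2=P^2$, where $P=a_0^{d-1}\bar P(a_1/a_0,\dots,a_d/a_0)$ lies a priori only in $\bbF_2[a_0^{\pm 1},a_1,\dots,a_d]$. Since $P^2=\calD(d)$ belongs to the integrally closed domain $\bbF_2[a_0,\dots,a_d]$ and $P$ lies in its fraction field, $P$ is integral over it and hence $P\in\bbF_2[a_0,\dots,a_d]$. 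Finally, a square root of a homogeneous (resp. isobaric) element of a domain is again homogeneous (resp. isobaric) of half the degree (resp. weight), so $P$ is homogeneous of degree $d-1$ and isobaric of weight $\half d(d-1)$, which is an integer since $d(d-1)$ is even.

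The conceptual content—that $V$ is symmetric in characteristic $2$—is immediate, so the only real work is the bookkeeping of normalizations in the passage from the monic to the general form, ensuring that the square root is a genuine polynomial in $a_0,\dots,a_d$ of the claimed degree and weight rather than merely a symmetric function of the roots. As a sanity check, $d=2$ gives $\calD(2)=a_1^2$ and $d=3$ gives $\calD(3)=(a_1a_2+a_0a_3)^2$, matching $\deg P=d-1$ and $\mathrm{wt}(P)=\half d(d-1)$.
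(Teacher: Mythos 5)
Your proof is correct and rests on the same key observation as the paper's (supplied by G.~Kemper): in characteristic $2$ the Vandermonde product $\prod_{i<j}(\alpha_i-\alpha_j)$ is $\frakS_d$-invariant rather than merely alternating, hence expressible in the elementary symmetric functions, so the discriminant is its square. If anything, your write-up is more complete than the paper's, which stops at ``the discriminant of $f$ is a square'' and does not spell out the passage from the monic to the general form (your integrality argument) nor the claims that the square root is homogeneous of degree $d-1$ and isobaric of weight $\half d(d-1)$, both of which appear in the statement being proved.
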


\begin{proof}(Supplied by  G.~Kemper)
Take a univariate polynomial $f= a_0x^d + \cdots + a_1x + a_d$ whose coefficients are indeterminates over the field $\bbF_2$. Let $D = D(a_0,\ldots,a_d)$ be the discriminant of $f$.  We know that $D$, as a function in roots considered as indeterminants $y_1,\ldots,y_d$, is equal to the square of $P = \prod_{i < j} (y_i - y_j)$. Since $\cha(\Bbbk) = 2$, we have $P= \prod_{i < j} (y_i + y_j)$, and hence, $P$ is invariant under the whole symmetric group $\frakS_d$ (and not only under the alternating group $\mathfrak{A}_d$ if $\cha(\Bbbk) \ne 2$).  Since the Galois group of $f$ permutes the roots $r_i$, this means that $P$ lies in $\bbF_2[a_0, \ldots, a_d]$. So, the discriminant of $f$ is a square.
\end{proof}

\begin{corollary}
Suppose $\Bit(X)$ does not contain $\alpha$-planes. Then, the order $m$ of $\Bit(X)$ satisfies
$$
1\le m\le \tfrac{1}{4}d(d-1)(d-2)(d-3).
$$
\end{corollary}

\begin{proof}
The assumption implies that the order of $\Bit(X)$ is equal to the number of bitangents dropped from a general point in $\bbP^3$.  In the proof of Theorem~\ref{bidegrees}, by our assumption, the discriminant of the binary form $g_{d-2}$ is not zero; hence $m\ge 1$.  Now apply Proposition~\ref{discri} and obtain the other inequality.
\end{proof}

\begin{definition}
Let $m(d):=\frac{1}{2}d(d-1)(d-2)(d-3)$ (resp.\ $m(d):=\frac{1}{4}d(d-1)(d-2)(d-3)$) for $p\ne 2$ (resp.\ $p=2$).  We say that a surface $X$ is a \emph{general projection} surface if the order of $\Bit(X)$ is equal to $m(d)$.
\end{definition}

By Corollary~\ref{bidegrees2}, any surface in characteristic zero with ordinary double points as singularities is a general projection surface.

In the next section, we show that this is not true anymore in characteristic $2$.

In the following example of an involution $\sigma$ of type~\eqref{scen2} of a quartic surface in characteristic $2$, all rays from the congruence of lines $S(\sigma)$ are tangent to the surface.

\begin{example}
Let $X = V(F)$ be a quartic surface, where
$$
F= x_0^3x_1+x_1^3x_2+x_2^3x_3+x_3^3x_0.
$$
The surface is invariant with respect to the involution
$$
\sigma\colon [x_0,x_1,x_2,x_3]\longmapsto [x_2,x_3,x_0,x_1].
$$
The set of singular points of $X$ consists of one rational double point $x_0 = [1,1,1,1]$ of type $A_2$ and four ordinary double points $[1,\zeta,\zeta^3,\zeta^2]$, where $\zeta^5 = 1, \zeta\ne 1$.  The fixed locus $X^\sigma$ of $\sigma$ is the line $\ell=V(x_0+x_2,x_1+x_3)$.
 
For any point $x = [a,b,c,d]\in X$ not lying on $\ell$, the line $\ell_x = \la x,\sigma(x)\ra$ contains one extra point $[a+c,b+d,a+c,b+d]\in \ell$. The line $\ell_x$ is tangent to $X$ at this point.  So, $\sigma$ is an involution of type~\eqref{scen2}.
 
Any invariant line on $X$ is contained in a plane $x_3=x_1+t(x_0+x_2)$.  A straightforward computation shows that there are four invariant lines on $X$ corresponding to the parameters $t =0, \infty, e,e^2$, where $e^2+e+1 = 0$.  Under the map $\phi\colon X/(\sigma) \to S(\sigma) $, these lines are blown down to points.  The algebra of invariants $\Bbbk[x_0,x_1,x_2,x_3]^{(\sigma)}$ is generated by the polynomials
$$
p_0= x_0+x_2,\quad  p_1 = x_1+x_3,\quad  p_2 = x_0x_2,\quad  p_3 = x_1x_3,\quad  p_4 = x_0x_1+x_2x_3.
$$
They satisfy the relation 
$$
p_0^2p_3 + p_0p_1p_4 + p_1^2p_2 + p_4^2 = 0,
$$
and embed $\bbP^3/(\sigma)$ into $\bbP(1,1,2,2,2)$ as a weighted homogeneous hypersurface of degree $4$ with double line $V(p_0,p_1,p_4)$. We can write
$$
F = p_4\left(p_3+p_0^2\right)+\left(p_2+p_1^2\right)(p_0p_1+p_4).
$$
Therefore, the image of $X$ in $\bbP^3/(\sigma)$ is the intersection of two hypersurfaces of degree $4$, so it has trivial dualizing sheaf.  It is singular along the line $V(p_0,p_1,p_4)$.  The quotient $ X/(\sigma)$ is isomorphic to the normalization of this surface. Note that the polynomials $p_i$ do not generate the algebra of invariants of the projective coordinate ring of $X$.  In fact, $x_0(x_3^3+x_0x_1)$ and $x_1(x_0^3+x_1^2x_2)$ are invariant modulo $(F)$.

The map $\phi\colon X/(\sigma) \to S(\sigma)$ is just the projection map given by $(p_0,p_1,p_3)$. This shows that the congruence of lines $S(\sigma)$ is isomorphic to $\bbP(1,1,2)$.  The image $\phi(\ell)$ of the line $\ell$ is the singular point $[0,0,1]$ of $\bbP(1,1,2)$.  The images of the four invariant lines are two non-singular points.
\end{example}

\begin{lemma}\label{trope2}
With no assumption on the characteristic, suppose $\Bit(X)$ contains a $\beta$-plane of lines in a plane $\Pi = V(L)$, where $L$ is a linear form.  Then, the equation of\, $X$ can be written in the form
\begin{equation}\label{trope}
Q(x,y,z,w)^2+L(x,y,z,w)F(x,y,z,w) = 0,
\end{equation}
where $Q$ is a quadratic form and $F$ is a cubic form. The singular locus of\, $X$ contains the union of\, $V(L,Q,F)$ and $V(Q)\cap \Sing(V(F))$,
where $\Sing(V(F))$ is the singular locus of\, $V(F)$. In particular, $X$ is a singular quartic surface.
\end{lemma}

\begin{proof}
Let $C = X \cap \Pi$. Any line in $\Pi$ is a bitangent of $X$. Thus, it is bitangent to $C$. This could happen only if $C$ is a double conic, reducible or not. Thus, $\Pi$ is a trope conic, and its equation can be written as in \eqref{trope}.

Without loss of generality, we may assume that $L = x$ is a coordinate plane. Taking the partials, we find the singular locus contains $V(x,Q,F)\cup (V(Q) \cap \textrm{Sing}(V(F)))$.
\end{proof}

Recall that the \emph{$p$-rank} of a smooth curve $C$ of genus $g > 0$ over an algebraically closed field of characteristic $p > 0$ is the $p$-rank of the elementary abelian group $J(C)[p]$ of $p$-torsion points of the Jacobian variety of $C$.  It takes values in the set $[0,g]$. When $p = 2$ and $C$ is a smooth plane quartic, the $p$-rank takes values in $\{3,2,1,0\}$ and can be characterized by the number of bitangents of $C$ being equal to $7, 4, 2, 1$, respectively; see \cite{R}, \cite[Section~3]{SV}.

The proof of the following proposition can be found in \cite[Proposition 1]{Wall} (see also \cite{R}).

\begin{proposition}\label{wall}
Let $C$ be a smooth plane quartic curve over an algebraically closed field of characteristic $2$. Then, it is projectively equivalent to one of the curves
\begin{enumerate}
\item\label{wall-1} $Q(x,y,z)^2+xyz(x+y+z) = 0$, where $Q(1,0,0),\  Q(0,1,0),\  Q(0,0,1),\ Q(1,1,0),\
Q(1,0,1),
Q(0,1,1)$, $Q(1,1,1)\ne 0 $;
\item\label{wall-2} $Q(x,y,z)^2+xyz(y+z) = 0$, where $Q(1,0,0),\  Q(0,1,0),\  Q(0,0,1),\  Q(0,1,1) \ne 0$;
\item\label{wall-3} $Q(x,y,z)^2+xy(y^2+xz) = 0$, where $Q(1,0,0),\  Q(0,0,1)\ne 0$;
\item\label{wall-4} $Q(x,y,z)^2+x(y^3+x^2z)  = 0$, where $Q(0,0,1)\ne 0$.
\end{enumerate}
\end{proposition}

One can check that the number of bitangents is indeed equal to $7,4,2,1$, respectively. More precisely, the bitangents are
\begin{enumerate}
\item $V(x), V(y), V(z), V(x+y), V(y+z), V(z+x), V(x+y+z)$;
\item $V(x), V(y),V(z), V(y+z)$;
\item $V(x), V(y)$;
\item $V(x)$.
\end{enumerate}

The paper \cite{Wall} of Wall also computes the automorphism group of a plane quartic in one of the normal forms~\eqref{wall-1}--\eqref{wall-4}.  This implies that the codimension of the subspace of plane quartics of the form~\eqref{wall-1}--\eqref{wall-4} is equal to $0,1,2,3$, respectively; see \cite{Wall}.

Based on Wall's computations, the following proposition is proved in \cite[Section 2]{Nart}.

\begin{proposition}\label{nart}
A smooth plane quartic curve over any field $k$ $($not necessary algebraically closed\,$)$ of characteristic~$2$ of $p$-rank less than $3$ is isomorphic over $K$ to a curve $Q^2+LF$, where $L$ is a linear form.
\end{proposition}

The next theorem shows, surprisingly for us, that, although the moduli space of non-ordinary (\textit{i.e.}, of $p$-rank less than $3$) curves is of dimension $5$ in the moduli space of all plane quartic curves, a general hyperplane section of a non-singular quartic surface is an ordinary curve.
  
\begin{theorem}\label{$2$-rank0}
Let $X$ be a normal quartic surface over an algebraically closed field of characteristic $2$ whose general hyperplane section is not an ordinary plane quartic.  Then, $\Bit(X)$ contains a $\beta$-plane, and hence, $X$ is singular.
\end{theorem}

\begin{proof}   
Let
$$
F:= \left\{(x,\Pi)\in X\times \check{\bbP}^3:x\in \Pi\right\} \subset \left\{(x,\Pi)\in \bbP^3\times \check{\bbP}^3:x\in \Pi\right\}
$$
be the universal family of plane sections of $X$ considered as the closed subset of the universal family of planes.  Passing to the generic fiber of the second projection, we obtain a plane quartic curve $C_K$ over the field $K$ of rational functions on $\check{\bbP}^3$.  Applying Proposition~\ref{nart}, we find that $C_K = V(Q^2+LF)$, where $L$ is a linear form with coefficients in $K$.  Applying a linear transformation of $\bbP^3$ over $K$, we may assume that the equation of $V(L)$ in $\bbP_K^3$ is $x_0 = 0$, where $(x_0,x_1,x_2,x_3)$ are coordinates in $\bbP_K^3$. This implies that any line in the plane $V(x_0)$ is a bitangent line of $X$.  Applying Lemma~\ref{trope2}, we obtain that $X$ is singular.
  \end{proof}

\begin{example} (Suggested by the referee).
  Consider the surface $X$ with  equation
$$
w^2(ax+by+z)^2+x\left(y^3+x^2z\right) = 0.
$$
The surface $X$ has a unique (non-rational) singular point $[0,0,0,1]$ and an ordinary node $[0,0,1,0]$.  The point $[0,0,0,1]$ is a unique inseparable projection center of $X$.  In fact, take a general point $P = [x_0,y_0,z_0,w_0]$ in $\bbP^3$. Substituting the parametric equation $[x,y,z,w] = [sx_0+tu_0,sy_0+tu_1,sz_0+tu_2,sw_0+tu_3]$, we find one bitangent line that connects $P$ with the point $[0,0,0,1]$.  Thus, $\Bit(X)$ contains an $\alpha$-plane. On the other hand, a general hyperplane section $w= \alpha x+\beta y + \gamma z$ is the case of Proposition~\ref{wall}; that is, it is a plane quartic curve with 2-rank zero.  By Proposition~\ref{$2$-rank0}, it contains a $\beta$-plane. Thus, $\Bit(X)$ contains the union of an $\alpha$-plane and a $\beta$-plane.  Further computation shows that there is nothing else.
\end{example}

\section{Kummer quartic surfaces in characteristic 2, ordinary case}\label{sec4}
Kummer quartic surfaces in characteristic $2$ are divided into three classes according to curves of genus $2$ being ordinary, $2$-rank $1$, or supersingular (see, \textit{e.g.}, \cite{KK}).  In this section, we discuss the simplest case, an ordinary Kummer quartic surface.  The Kummer quartic surface $X$ in characteristic $2$ associated with an ordinary genus $2$ curve $C$ is given by
\begin{equation}\label{eq}
X=V\left(a\left(x^2y^2+z^2w^2\right) + b\left(x^2z^2 + y^2w^2\right)+c\left(x^2w^2+y^2z^2\right)+ xyzw\right),
\end{equation}
where $a, b, c$ are non-zero constants. They coincide with the coefficients of the Igusa canonical model of $C$; see \cite{LP,LP2,KK}.  The Kummer quartic surface $X$ has four singular points
$$
p_1= [1,0,0,0],\quad p_2=[0,1,0,0],\quad p_3=[0,0,1,0],\quad p_4= [0,0,0,1],  
$$ 
all of which are rational double points of type $D_4$, see \cite{Sh,Ka}, and $X$ has four tropes defined by the hyperplane sections $x=0, y=0, z=0, w= 0$, respectively.

Applying Proposition~\ref{wall}, we find that a general hyperplane section of $X$ is a plane quartic with the $2$-rank equal to $3$. So, we expect that the class of $\Bit(X)$ is equal to $7$. Computing the partial derivatives of the polynomial defining $X$, we find that there are no inseparable projection centers of $X$. So, $\Bit(X)$ does not contain $\alpha$-planes.

\begin{lemma}\label{lemSkew}
Let $S$ be the congruence of lines of bidegree $(1,1)$ of rays intersecting the skew lines $\ell_1 = V(x,y)$ and $\ell_2 = V(z,w)$ $($or $V(x,z)$ and $V(y,w)$, or $V(x,w)$ and $V(y,z))$.  Then, each ray of\, $S$ is a bitangent line of\, $X$.
\end{lemma}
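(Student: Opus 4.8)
The plan is to restrict the quartic $F$ of \eqref{eq} to a ray of $S$ and exploit the fact that, in characteristic $2$, the resulting binary quartic is forced to be a perfect square. Take the first pair $\ell_1 = V(x,y)$, $\ell_2 = V(z,w)$; a ray joining $[\alpha,\beta,0,0]\in\ell_2$ to $[0,0,\gamma,\delta]\in\ell_1$ is parametrized by $[s,t]\mapsto[s\alpha,s\beta,t\gamma,t\delta]$. The key structural observation is that every monomial of $F$ has even degree in the pair $\{x,y\}$ and even degree in the pair $\{z,w\}$: indeed $x^2y^2$ and $z^2w^2$ have bidegrees $(4,0)$ and $(0,4)$, while $x^2z^2,\,y^2w^2,\,x^2w^2,\,y^2z^2$ and $xyzw$ are all of bidegree $(2,2)$. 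Hence substituting the parametrization produces only the powers $s^4,\,s^2t^2,\,t^4$, so that
$$F(s\alpha,s\beta,t\gamma,t\delta)=A\,s^4+B\,s^2t^2+C\,t^4=P(s^2,t^2),$$
where $A=a\alpha^2\beta^2$, $C=a\gamma^2\delta^2$, and $B$ collects the $b$-, $c$- and $xyzw$-contributions.

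The decisive step is then purely characteristic $2$: factoring $P(u,v)=A\prod_i(u-\rho_i v)$ and using the identity $s^2+\rho t^2=(s+\sqrt{\rho}\,t)^2$ (valid since the cross term $2\sqrt{\rho}\,st$ vanishes), one gets
$$F(s\alpha,s\beta,t\gamma,t\delta)=A\prod_i\bigl(s+\sqrt{\rho_i}\,t\bigr)^2,$$
a square of a quadratic binary form. Thus the ray meets $X$ only in points of even multiplicity. For a general ray we have $A,C\neq0$ and $B\neq0$; the latter is exactly the condition that $P$ be separable in characteristic $2$ (for $Au^2+Buv+Cv^2$ with $A\neq0$ a double root forces $B=0$), so $\rho_1\neq\rho_2$ and the ray is tangent to $X$ at two distinct points. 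This is precisely a bitangent line, and it reflects the characteristic $2$ phenomenon of Proposition \ref{discri}.

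To conclude, I would note that the rays transversal to two fixed skew lines form a congruence of bidegree $(1,1)$ (a unique transversal through a general point, and likewise in a general plane), which identifies $S$; since a general ray is bitangent and bitangency is a closed condition, the whole congruence lies in $\Bit(X)$. The remaining two choices of skew pairs, $V(x,z),V(y,w)$ and $V(x,w),V(y,z)$, correspond to the other two splittings of $\{x,y,z,w\}$ into two pairs (matched to the coefficients $b$ and $c$); $F$ is equally even with respect to those bipartitions, so the identical substitution applies. The only point requiring care — and it is a routine finite check rather than a real obstacle — is the behaviour of the degenerate rays, those meeting a singular point $p_i$ or forcing $B=0$: there one verifies that $P(s^2,t^2)$ is still a (possibly degenerate) square and that no ray of $S$ is actually contained in $X$, so that every ray is a bitangent line in the limiting sense as well.
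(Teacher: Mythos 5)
Your proof is correct and takes essentially the same route as the paper's: parametrize a ray meeting the two skew lines as $[s\alpha,s\beta,t\gamma,t\delta]$, note that $F$ is even in each of the coordinate pairs so only $s^4$, $s^2t^2$, $t^4$ survive, and conclude that in characteristic $2$ the restricted binary quartic is the square of a quadratic form, forcing even intersection multiplicity at every point of the ray. The extra material you add (the separability criterion $B\neq 0$, the closure argument, and the check on degenerate rays) merely makes explicit what the paper's one-line conclusion leaves implicit.
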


\begin{proof}
It is enough to consider the first pair of skew lines. A line $\ell$ passing through a point $q = [x_0,y_0,z_0,w_0]$ not on $\ell_1$ or $\ell_2$ is the intersection of two planes $\la \ell_1,q\ra$ and $\la \ell_2,q\ra$. The parametric equation of $\ell$ is
$$
[x,y,z,w] = [sx_0,sy_0,tz_0,tw_0].
$$
The line $\ell$ intersects $\ell_1$ at the point $[0,0,z_0,w_0]$ and intersects $\ell_2$ at the point $[x_0,y_0,0,0]$. Plugging these equations into \eqref{eq}, we get
\begin{equation}\label{square}
a\left(x_0^2y_0^2s^4 + z_0^2w_0^2t^4\right)+\left(b\left(x_0^2z_0^2 + y_0^2w_0^2\right)+ c\left(x_0^2w_0^2+ y_0^2z_0^2\right) +
x_0y_0z_0w_0\right)s^2t^2=0.
\end{equation}
This equation is a square of a quadratic equation, and hence the assertion holds.
\end{proof}

We keep the notation used in the proof of Lemma~\ref{lemSkew}.  If $q$ belongs to $X$, then it corresponds to $[s,t] = [1,1]$, so $[s,t] = [1,1]$ is one of the solutions of~\eqref{square}. The second solution is $[s,t] = [z_0w_0, x_0y_0]$.  This defines an explicit bitangent involution $\sigma_1$ of $X$. We see that it is undefined only at the singular points $p_1,\ldots, p_4$ of $X$.  The involution is the restriction of the Cremona involution
$$
T_1\colon [x,y,z,w]\longmapsto [xzw,yzw,xyz,xyw], 
$$
which is equal to the composition of the standard inversion transformation $T$ and the involution 
$$
g_1\colon [x,y,z,w]\longmapsto [y,x,w,z].
$$ 
Note that $g_1$ is induced from a translation of $J(C)$ by a non-zero 2-torsion point.  There are three non-zero 2-torsion points on $J(C)$, and all three bitangent involutions are  products of $T$ and the involutions induced from the translations.
  
\begin{proposition}\label{pencils}
The surface $X$ admits three bitangent involutions $\sigma_1,\sigma_2,\sigma_3$.  The congruence of lines $S(\sigma_i)$ is equal to the congruence of lines intersecting two skew lines from Lemma~\ref{lemSkew}.  The fixed curve $X^{\sigma_i}$ is an elliptic curve of degree $4$, and it is cut out set-theoretically by the quadric $V(xy+zw)$.
\end{proposition}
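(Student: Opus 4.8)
The plan is to prove everything for the first involution and obtain the other two from the symmetries of $F$. Equation \eqref{eq} is fixed by the three projective involutions $g_1=[y,x,w,z]$, $g_2=[z,w,x,y]$, $g_3=[w,z,y,x]$, each of which merely interchanges the two monomials inside each of the $a$-, $b$-, $c$-brackets and permutes the three pairs of skew lines of the preceding lemma. Moreover the standard inversion $T=[yzw,xzw,xyw,xyz]$ satisfies $F\circ T=(xyzw)^2F$ (so it preserves $X$) and commutes with every coordinate permutation. Hence each composite $\sigma_i=T\circ g_i$ is a birational involution of $X$; for $i=1$ it is exactly the map $T_1$ exhibited above. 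This gives the three bitangent involutions.

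That $S(\sigma_i)$ is the congruence of the lemma is immediate from the construction: for general $q=[x_0,y_0,z_0,w_0]\in X$ the line $\la q,\sigma_1(q)\ra$ is, by the explicit formula for $T_1$ and the parametrization in the lemma, the line $[sx_0,sy_0,tz_0,tw_0]$, which meets $\ell_1=V(x,y)$ and $\ell_2=V(z,w)$; conversely, being bitangent to $X$ by the lemma, a general member of that bidegree $(1,1)$ congruence is $\la q,\sigma_1(q)\ra$ for the pair $\{q,\sigma_1(q)\}$ of its two tangency points. Thus $S(\sigma_1)$ coincides with the congruence of lines meeting $\ell_1$ and $\ell_2$, and similarly for $\sigma_2,\sigma_3$.

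For the fixed curve I read off when the two tangency points coincide. The two solutions of \eqref{square} are $[s:t]=[1:1]$ and $[s:t]=[z_0w_0:x_0y_0]$, and $\sigma_1(q)=q$ exactly when they agree, i.e. when $x_0y_0=z_0w_0$; in characteristic two this is $q\in V(xy+zw)$. Hence $X^{\sigma_1}=X\cap V(xy+zw)$ set-theoretically. To identify this intersection I substitute $zw=xy$ into $F$: the bracket $a(x^2y^2+z^2w^2)$ becomes $2ax^2y^2=0$ and the term $xyzw$ becomes $x^2y^2$, so on the quadric
\[
F\equiv x^2y^2+b(x^2z^2+y^2w^2)+c(x^2w^2+y^2z^2).
\]
By the Frobenius identity $\left(\sum c_im_i\right)^2=\sum c_i^2m_i^2$ valid in characteristic two, the right-hand side equals $L^2$ with $L=xy+\sqrt{b}\,(xz+yw)+\sqrt{c}\,(xw+yz)$. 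Therefore $X\cap V(xy+zw)$ is supported on the complete intersection $V(L)\cap V(xy+zw)$ of two distinct quadrics in $\bbP^3$; this is a curve of degree $4$ whose dualizing sheaf is trivial by adjunction, hence of arithmetic genus $1$. Replacing $(g_1,V(xy+zw))$ by $(g_2,V(xz+yw))$ and $(g_3,V(xw+yz))$ yields the statements for $\sigma_2$ and $\sigma_3$.

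The hard part will be to confirm that $V(L)\cap V(xy+zw)$ is a smooth irreducible curve, so that it is genuinely elliptic, rather than a degenerate complete intersection, and to control its four intersections with the $D_4$-points $p_1,\dots,p_4$, which lie on both quadrics. I expect this to reduce to showing that the pencil spanned by $xy+zw$ and $L$ has four distinct singular members, a determinantal condition in $a,b,c$ that should fail only on a proper closed set avoided because $a,b,c\neq 0$ and $C$ is ordinary; a local computation at each $p_i$, using $b,c\neq 0$, then shows smoothness there as well.
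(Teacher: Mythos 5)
Your construction of the three involutions $\sigma_i=T\circ g_i$, the identification of $S(\sigma_i)$ with the $(1,1)$-congruences of the lemma, and the Frobenius-square computation showing that $X\cap V(xy+zw)$ is supported on the quartic $V(L)\cap V(xy+zw)$ with $L=xy+\sqrt{b}\,(xz+yw)+\sqrt{c}\,(xw+yz)$ all agree with the paper's own argument: the paper likewise realizes the involutions as products of the inversion $T$ with the coordinate permutations induced by the three nonzero $2$-torsion translations, and finds the fixed curve by restricting $F$ to the fixed quadric of $T_1$ and extracting a square root. Up to that point your proof is correct and essentially identical to the paper's.

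The genuine gap is the step you yourself flag as ``the hard part'': proving that $V(L)\cap V(xy+zw)$ is smooth and irreducible, hence actually elliptic. Without this, the statement is not proved, since a degree-$4$ complete intersection of arithmetic genus $1$ could a priori be nodal or a union of conics or lines. Moreover, the plan you sketch for this step cannot work in characteristic $2$: you propose to show the pencil spanned by $xy+zw$ and $L$ has \emph{four} distinct singular members, but in characteristic $2$ a quadric in $\bbP^3$ is singular exactly where the Pfaffian of its (alternating) polar bilinear form vanishes, and the determinant is the square of this Pfaffian; so the singular members of any pencil are cut out by a binary \emph{quadratic}, and there are at most two of them. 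This is precisely the squaring phenomenon of Proposition~\ref{discri}. Concretely, the Pfaffian of $\lambda(xy+zw)+\mu L$ here is $\lambda^2+\lambda\mu+(b+c)\mu^2$, which has two distinct roots for every $b,c$, so counting singular members of the pencil detects nothing about the base curve. The paper closes this step by direct computation instead: on the chart $x=1$ one eliminates $y=zw$, and the curve becomes the plane cubic $zw+\sqrt{b}\,z(1+w^2)+\sqrt{c}\,w(1+z^2)=0$, whose singular points would have to satisfy $\sqrt{b}\,(1+w^2)=w$ and $\sqrt{c}\,(1+z^2)=z$; substituting back into the cubic forces $zw=0$, which then contradicts $b,c\neq 0$. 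The only points of the curve outside this chart are among the singular points $p_i$ of $X$, where the same kind of local computation (gradient $(\sqrt{b},\sqrt{c})\neq 0$) gives smoothness; connectedness of complete intersections then gives irreducibility. Replacing your last paragraph by such a direct Jacobian check completes the proof.
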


\begin{proof}
It is enough to consider the first involution defined by the two lines $\ell_1 = V(x,y)$ and $\ell_2 = V(z,w)$.  We denote it by $\sigma$.  We have already proved the first assertion.  The fixed locus of $T_1$ is the quadric $Q= V(xy+zw)$. The quadric intersects $X$ along a complete intersection of two surfaces:
\begin{eqnarray*}
 &&xy+zw = 0,\\
 &&b\left(x^2z^2 + y^2w^2\right)+ c\left(x^2w^2+ y^2z^2\right) +xyzw = 0.
\end{eqnarray*}
Plugging  $y=zw/x$ into the second equation, we find that the intersection is a quartic curve taken with multiplicity $2$. It passes through the singular points, and these points are non-singular on the quartic.  We can also check (substituting $x = 1$) that its projection to the plane is a smooth cubic curve.  Thus, $C = X^\sigma$ is, set-theoretically, a smooth quartic elliptic curve.
  
Note that this result is analogous to the fact that the fixed locus of any of the six bitangent involutions of a Kummer quartic surface in characteristic different from $2$ is an octic curve cut out by a quartic surface with multiplicity $2$.
\end{proof}

Let us now look at the birational map 
$$
\phi_\sigma:Y= X/(\sigma)\longdashrightarrow S(\sigma) \cong \bbP^1\times \bbP^1.
$$
Recall that $\phi_\sigma$ is not defined at the singular points of $X$.  The pencil of planes $\ell_1^\perp$ (resp.\ $\ell_2^\perp$) with  base line $\ell_1$ (resp.\ $\ell_2$) cuts out a pencil of plane quartic curves in $X$. The involution $\sigma$ acts identically on the parameters of the pencil.  Let
$$
\pi\colon\tilde{X}\lra X
$$ 
be the minimal resolution of $X$. The involution $\sigma$ lifts to a biregular involution $\tsigma$ of 
$\tX$. 
  
\begin{lemma}\label{pencilD6A1}
The pencils $\ell_1^\perp$ and $\ell_2^\perp$ define two invariant elliptic pencils $|F_1|$ and $|F_2|$ on $\tX$.  Each pencil has four reducible fibers: two fibers of type $\tilde{D}_6$ and two fibers of type $\tilde{A}_1^*$ $($of Kodaira's type {\rm III}$)$.
\end{lemma}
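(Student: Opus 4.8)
The plan is to realize each pencil as the projection of $X$ from one of the two skew lines and to read off the fibration from the resulting family of plane quartics. I will treat $\ell_1^\perp$; the case of $\ell_2^\perp$ is identical after the coordinate swap $(x,y,z,w)\mapsto(z,w,x,y)$, which preserves $F$ and interchanges $\ell_1\leftrightarrow\ell_2$. Parametrize the planes through $\ell_1=V(x,y)$ by $\Pi_t=V(x-ty)$, $t\in\bbP^1$ (with $t=\infty$ the plane $V(y)$); this is the projection $X\dashrightarrow\bbP^1$ from $\ell_1$, undefined only at $\ell_1\cap X=\{p_3,p_4\}$, and it extends to a morphism $\phi_1:\tX\to\bbP^1$ after the minimal resolution. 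Since $T_1^*(x-ty)=zw\,(x-ty)$, every plane $\Pi_t$ is $T_1$-invariant, so $\sigma$ and hence $\tsigma$ preserve $\phi_1$ fibrewise; the fibres are the desired invariant pencil $|F_1|$. Substituting $x=ty$ into \eqref{eq} gives the fibre $C_t$ as the plane quartic $at^2y^4+G\,y^2+az^2w^2=0$, where $G=G(z,w)=(bt^2+c)z^2+t\,zw+(b+ct^2)w^2$.

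The first step is to see that $|F_1|$ is elliptic. Writing the equation as $at^2(y^2)^2+G\,y^2+az^2w^2$ exhibits the projection $C_t\to\bbP^1_{[z:w]}$, $[y:z:w]\mapsto[z:w]$, as a separable double cover; because $\mathrm{char}\,\Bbbk=2$ it is an Artin--Schreier-type cover, wildly ramified over the two zeros of $G$. A Riemann--Hurwitz computation including the wild contribution (the same square-discriminant phenomenon as in Proposition \ref{discri}) gives $p_a(C_t)=1$, and the only singularities of a general $C_t$ are ordinary double points at $p_3=[0:1:0]$ and $p_4=[0:0:1]$, resolved on $\tX$. Hence the proper transform $\tilde C_t$ is a smooth genus-$1$ curve, $\phi_1$ is a genuine (not quasi-elliptic) elliptic fibration, and the same argument gives $|F_2|$.

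Next I locate the reducible fibres by following the branch divisor $V(G)\subset\bbP^1_{[z:w]}$ together with the two points $[1:0],[0:1]$ below $p_3,p_4$; a fibre degenerates exactly when $V(G)$ degenerates or meets $\{[1:0],[0:1]\}$. The discriminant of the binary quadratic $G$ is $t^2$ (the cross term squared, since $4=0$), so the two branch points collide only at $t=0$; there $\Pi_0=V(x)$ is a trope and $C_0$ is the double conic $2Q_x$, while $t=\infty$ gives the trope $V(y)$ with its double conic. A branch point meets $[1:0]$ resp. $[0:1]$ exactly when $G(1,0)=bt^2+c=0$ resp. $G(0,1)=b+ct^2=0$; as $t\mapsto t^2$ is a bijection in characteristic $2$, these are the two single values $t=\sqrt{c/b}$ and $t=\sqrt{b/c}$. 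Thus $\phi_1$ has exactly four special fibres, at $t=0,\infty$ (the tropes) and at $t=\sqrt{c/b},\sqrt{b/c}$ (a branch point colliding with a node).

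Finally I determine the Kodaira types by resolving the $D_4$-points explicitly in characteristic $2$. For $t=0,\infty$ the fibre is a double conic through three of the four $D_4$-points (those lying in the trope plane); I will show that, together with the exceptional $(-2)$-curves over those points, it assembles into an $I_2^*=\tilde D_6$ configuration, the conic furnishing the central multiplicity-$2$ component and two of the nodes furnishing the two $D$-forks, the remaining exceptional curves being horizontal. For $t=\sqrt{c/b}$ resp. $\sqrt{b/c}$, the coincidence of a wild branch point with the node $p_3$ resp. $p_4$ turns that node into a tacnode whose resolution is two smooth rational curves meeting with multiplicity $2$, i.e. Kodaira type III $=\tilde A_1^*$. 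As a global check, $2\,e(I_2^*)+2\,e(\mathrm{III})=2\cdot 8+2\cdot 3=22$, leaving Euler number $2$ in $e(\tX)=24$ for the irreducible singular fibres and the wild conductors of the additive fibres. The main obstacle is exactly this local analysis over a field of characteristic $2$: one must carry out the $D_4$-resolutions, decide which exceptional components are vertical and which horizontal (to obtain $I_2^*$ rather than $I_0^*=\tilde D_4$ or the multiplicative $I_2$), and control the wild ramification so that both the fibre genus and the distinction between type III and a nodal $I_1$ are rigorous.
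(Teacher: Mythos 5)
Your outline follows the same route as the paper (study the pencil of plane quartics cut out by the planes through $\ell_1$, find the special members, and identify the fibers on the minimal resolution), and your list of special parameter values and of fiber types agrees with the lemma; the equation for $C_t$ and the $T_1$-invariance of the planes are also correct. But the execution goes wrong at exactly the characteristic-$2$ points. First, the base points are not ordinary double points: the quadratic part of $C_t$ at $p_3$ is $(bt^2+c)y^2+aw^2$, which in characteristic $2$ is the square of a linear form, so the tangent cone is a double line and the singularity is a cusp (this is what the paper checks). The genus count $g=3-1-1=1$ survives, since a cusp also has $\delta=1$, but your degeneration picture ("a node turns into a tacnode") starts from the wrong singularity. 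Second, the projection $[y:z:w]\mapsto[z:w]$ is not a separable double cover: it has degree $4$ and is inseparable, because the equation is a quadratic in $y^2$; to run Riemann--Hurwitz you must first set $s=y^2$ and pass to the separable double cover $at^2s^2+Gs+az^2w^2=0$, whose normalization is a Frobenius twist of that of $C_t$.

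The serious gaps are in the fiber-type determinations, which are the content of the lemma. Your $\tilde{D}_6$ assembly is geometrically impossible: the trope plane (say $V(x)$, your fiber over $t=0$) contains three singular points of $X$, namely $p_3,p_4\in\ell_1$ and one point off the axis, $p_2=[0,1,0,0]$. Since $p_2\notin\ell_1$, the projection from $\ell_1$ is defined at $p_2$ and sends it to $t=0$; hence all four exceptional $(-2)$-curves over $p_2$ are contracted to a point lying over $t=0$ and must appear as vertical components of that fiber --- none of them can be horizontal. Your configuration (conic as the middle multiplicity-$2$ component, forks from two of the three points, "remaining exceptional curves horizontal") is therefore excluded no matter which two points you choose. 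The correct configuration, which is what the paper verifies, places the double conic at one \emph{end} of the multiplicity-$2$ chain: it meets one outer component over each of $p_3$ and $p_4$ (the two multiplicity-$1$ tails at that end) and one outer component over $p_2$, and the chain continues through that outer component to the central component $E_0^{(2)}$, whose remaining two branches are the far tails. Your type III argument also fails: the degenerate quartic (a cusp at one base point, a tacnode at the other) is irreducible, so its proper transform is a single rational curve and cannot "resolve into two smooth rational curves meeting with multiplicity $2$"; indeed no fiber of an elliptic fibration can be an irreducible tacnodal curve, since fibers have arithmetic genus $1$. The second component of the type III fiber must be one of the exceptional curves over the $D_4$ point carrying the tacnode, and your argument never produces it.
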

  
\begin{proof} 
We consider only the pencil defined by $\ell_1^\perp = V(y+tx)$ with parameter $t\in \bbP^1$.  We check that, for a general parameter $t$, the plane quartic $X\cap V(y+tx)$ has two cusps at $p_3$ and $p_4$; hence its geometric genus is equal to $1$.  The cusps are the base points of the pencil of quartics. There are special values of the parameter $t$: $0, \infty, \sqrt{c/b},\sqrt{b/c}$. For $t=0$ (resp.\ $t=\infty$), the quartic is a double conic; it passes through the third singular point $p_1\in \ell_2$ (resp.\ $p_2\in \ell_2$).  The conic is one of the four \emph{trope-conics} on $X$, \textit{i.e.}, conics cut out set-theoretically by one of the trope planes:
\begin{eqnarray*}
C_{123}& =& V\left(\sqrt{a}xy+\sqrt{b}xz+\sqrt{c}yz\right),\\
C_{124}& =& V\left(\sqrt{a}xy+\sqrt{b}yw+\sqrt{c}xw\right),\\
C_{134}& =& V\left(\sqrt{a}zw+\sqrt{b}xz+\sqrt{c}xw\right),\\
C_{234}& =& V\left(\sqrt{a}zw+\sqrt{b}yw+\sqrt{c}yz\right).
\end{eqnarray*}

Let $E_i=E_0^{(i)}+E_1^{(i)}+E_2^{(i)}+E_3^{(i)}$ for $i = 1, 2, 3, 4$ be the exceptional curves over the singular points $p_i$ of~$X$, where $E_0^{(i)}$ is the central component.  We can check that the proper transform of $V(y)$ intersects one of the components $E_i^{(j)}$ for $i\ne 0$ and $j = 1,2,3,4$.  The corresponding fiber of $|F_1|$ is of type $\tilde{D}_6$.  If $t = a/b$, the quartic has one cusp at one of the base points and a cusp followed by an infinitely near node at another base point.  The corresponding fiber is of type $\tilde{A}_1^*$.
\end{proof}
  
Let us look at the orbit space $\tilde{Y} = \tX/(\tsigma)$.  Recall that each singular point of $X$ is a rational double point of type $D_4$ with  exceptional curve $E_0^{(i)}+E_1^{(i)}+E_2^{(i)}+E_3^{(i)}$ over $p_i$, where $E_0^{(i)}$ is the central component.  Observe that the transformation $T_1$ blows down each conic $C_{ijk}$ to a singular point:
$$
C_{123}\lra p_3,\quad  C_{124}\lra p_4,\quad  C_{134}\lra p_1, \quad C_{234}\lra p_2.
$$
The involution $\tsigma$ interchanges the proper transform $\tilde{C}_{123}$ of $C_{123}$ with the central component $E_0^{(3)}$, and similarly for other trope conics.  It acts on the corresponding reducible fiber via a symmetry of its dual graph.

Now, we are ready to describe the birational morphism
$$
\tilde{\phi}_\sigma\colon\tilde{Y}\lra S(\sigma).
$$
Let $|\frakf_1|$ and $|\frakf_2|$ be the rulings of the quadric $Q=S(\sigma)$ corresponding to the family of planes $\ell_1^\perp$ and~$\ell_2^\perp$. Their pre-images under $\tX \to \tilde{Y} \to Q$ are the two elliptic pencils from Lemma~\ref{pencilD6A1}.  Let $L_0^{(1)},L_\infty^{(1)}$ (resp.\ $L_0^{(2)},L_\infty^{(2)}$) be the lines from $|\frakf_1|$ (resp.\ $|\frakf_2|$) corresponding to the planes $V(x), V(y)\in \ell_1^\perp$ (resp.\ $V(z),V(w)\in \ell_2^\perp$).  Their pre-images in $\tX$ are the reducible fibers of type $\tilde{D}_6$ of $|F_1|$ (resp.\ $|F_2|)$.  The image of the curve $\tX^{\tsigma}$ in $Q$ is a smooth curve $B$ of bidegree $(2,2)$ that passes through the vertices of the quadrangle of lines $L = L_0^{(1)}+L_\infty^{(1)}+L_0^{(2)}+L_\infty^{(2)}$.

We summarize our discussion above with the following assertion. 

\begin{proposition}
The morphism $\tilde{\phi}_\sigma\colon \tX/(\tsigma)\to S(\sigma)$ is the blow-up of the following eight points on $S(\sigma)$: four vertices of the quadrangle of lines $L$ and infinitely near points to them corresponding to the tangent direction of the curve $B$. The pre-images of the two fibers in $|\frakf_i|$ corresponding to the ramification points of the projection map $B\to |\frakf_i|^*\cong \bbP^1$ are reducible fibers of\, $|F_i|$ of type $\tilde{A}_1^*$.
\end{proposition}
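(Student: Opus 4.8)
The plan is to show first that $\tilde\phi_\sigma$ is a birational morphism of smooth projective surfaces, and only then to pin down its centers. Since $\sigma$ is a bitangent involution, it is of Type (iii), so $\phi_\sigma\colon Y\da S(\sigma)$ has degree one; hence $\tilde\phi_\sigma\colon \tilde{Y}=\tX/(\tsigma)\to Q$ is birational. The quotient $\tilde{Y}$ is smooth: along the smooth fixed curve $\tX^{\tsigma}$ the involution has, in suitable local coordinates, the Artin--Schreier normal form $(u,v)\mapsto(u,v+u)$, whose ring of invariants $\Bbbk[u,\,v^2+uv]$ is polynomial, so the quotient is smooth (this is the local origin of the discriminant being a square, Proposition \ref{discri}). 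A birational morphism between smooth projective surfaces factors as a composition of blow-ups at points in any characteristic, so $\tilde\phi_\sigma$ is the blow-up of some reduced or infinitely near cycle, and $e(\tilde{Y})=e(Q)+k=4+k$, where $k$ is the number of blow-ups.

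To compute $k$ I would use the double cover $f\colon\tX\to\tilde{Y}$. Away from the fixed locus the $\tsigma$-action is free, so $f$ is there \'etale of degree two; by additivity of the $\ell$-adic Euler characteristic, $e(\tX)=2e(\tilde{Y})-e(\tX^{\tsigma})$ (the wildness of $f$ does not enter, since we use \'etale-ness only on the free locus and a bijection on the fixed locus). As $\tX$ is a K3 surface, $e(\tX)=24$, so it remains to prove $e(\tX^{\tsigma})=0$, i.e. that the fixed locus is exactly the proper transform of the elliptic curve $X^\sigma$ of Proposition \ref{pencils}, with no further fixed curves or isolated fixed points. Here the explicit action recorded above is decisive: $\tsigma$ interchanges each trope-conic $\tilde{C}_{ijk}$ with a central component $E_0^{(\cdot)}$ and permutes the remaining exceptional components of the four $D_4$-configurations without fixing any pointwise; and since in characteristic two a nontrivial involution of $\bbP^1$ has a single fixed point, one must check that the only isolated fixed points so produced already lie on $X^\sigma$. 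Granting $e(\tX^{\tsigma})=0$ yields $e(\tilde{Y})=12$ and hence $k=8$.

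It then remains to locate the eight centers. The contracted curves are precisely the components of the fibers of $|F_1|$ and $|F_2|$ that map to points of $Q$; since the two $\tilde{D}_6$ fibers of each pencil map onto the four lines $L_0^{(i)},L_\infty^{(i)}$, the contraction is concentrated over the four vertices of the quadrangle $L$, which I would match with the four resolved $D_4$-points $p_i$ via the incidence of the trope-planes $V(x),V(y),V(z),V(w)$. The local claim at each vertex $v$ is that $\tilde\phi_\sigma$ is the blow-up of $v$ followed by the blow-up of the infinitely near point cut out on the exceptional $\bbP^1$ by the branch curve $B$. I would verify this by a local computation: $B$ is a $(2,2)$-curve through all four vertices, its tangent direction at $v$ is governed by the way $X^\sigma$ meets the exceptional locus over $p_i$ (it passes through the $p_i$ smoothly on $\tX$), and the $\tilde{D}_6$-configuration together with the wild branching of $f$ forces exactly a chain of two contracted curves over $v$. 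This gives $2\times 4=8$ centers, consistent with the Euler-characteristic count.

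Finally, for the two type III fibers: the remaining special members of $|F_i|$, lying over $t=a/b,\,c/b$, are not contracted but map onto two generators of $|\frakf_i|$. I would identify these with the branch points of the degree-two projection $B\to|\frakf_i|^*\cong\bbP^1$ by restricting $f$ to a ruling: over a general point the fiber of $\tilde{Y}\to\bbP^1$ is $E/\tsigma\cong\bbP^1$ branched at the two points $E\cap B$, and where the ruling is tangent to $B$ these two branch points coincide, forcing the fiber of $|F_i|$ over that point to degenerate to a fiber of type III $=\tilde{A}_1^*$ (two components meeting at one point). The main obstacle is the characteristic-two fixed-point bookkeeping behind $e(\tX^{\tsigma})=0$ together with the local analysis at the vertices: one must track how the $D_4$-resolutions and the wild ramification of $f$ descend through $\tsigma$ to yield precisely one point plus one infinitely near point at each vertex, and confirm that no spurious fixed curves or quotient singularities arise.
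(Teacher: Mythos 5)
The paper gives no proof of this proposition at all (it is explicitly ``left to the reader''), so your write-up has to stand on its own; and while its skeleton is a legitimate route --- birationality, smoothness of the wild quotient along a fixed curve, the count $e(\tX)=2e(\tilde{Y})-e(\tX^{\tsigma})$ forced by \'etaleness off the fixed locus, then locating the centers --- the decisive steps are deferred rather than proved. The phrases ``Granting $e(\tX^{\tsigma})=0$'', ``I would verify this by a local computation'', and ``one must check that the only isolated fixed points \dots already lie on $X^\sigma$'' are precisely the content of the proposition: in characteristic two everything hinges on showing that $\tsigma$ has no fixed points outside the proper transform $\tilde{C}$ of the elliptic curve, and this has to be extracted from the graph action recorded in the paper. (For instance: since $\tsigma$ interchanges each trope-conic with a central component $E_0^{(i)}$, it acts on the middle component of each $\tilde{D}_6$ fiber by swapping its two intersection points with its neighbors, hence nontrivially, hence with a unique fixed point in characteristic two; and that point must lie on $\tilde{C}$ because $\tilde{C}\cdot F_i=2>0$ while fixed points can only sit on preserved components.) None of this bookkeeping appears in your proposal, and it is where the proof lives.

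Beyond the deferral, two steps are actually unsound. First, the claim that tangency of a ruling line to $B$ ``forces'' the fiber over that point to degenerate is false in characteristic two: a supersingular elliptic curve carries an involution with a single fixed point whose quotient is $\bbP^1$, so a smooth fiber can perfectly well lie over a branch point of $B\to|\frakf_i|^*$. To close this you need the converse direction --- that the two type $\mathrm{III}$ fibers themselves map onto lines tangent to $B$ (their two components are each preserved by $\tsigma$, and the unique fixed point of each coincides with the tangency point of the fiber, through which $\tilde{C}$ passes) --- combined with the fact that $B\to\bbP^1$ has at most two branch points. Second, the assertion that ``the contraction is concentrated over the four vertices of the quadrangle,'' matched with the four $D_4$-points and yielding ``a chain of two contracted curves over'' each vertex, is a non sequitur (that the $\tilde{D}_6$ fibers surject onto the four lines says nothing about where their contracted components land), and it sits badly with the $\tsigma$-action the paper describes: the curves lying over a vertex are the exceptional leaves common to a $\tilde{D}_6$ fiber of one pencil and a $\tilde{D}_6$ fiber of the other, and the swap $E_0^{(i)}\leftrightarrow\tilde{C}_{jkl}$ pairs such leaves over two \emph{different} singular points, while other contracted curves --- the middle components $E_1^{(i)}$, which are vertical for both pencils and sit in type $\mathrm{III}$ fibers of the other pencil --- map to non-vertex points of the quadrangle lines. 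So the incidence structure of the eight centers is considerably more delicate than your picture of ``one vertex plus one infinitely near point per $p_i$,'' and the postponed local analysis is exactly the step that would have exposed this.
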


So far, we have found that the bitangent surface $\Bit(X)$ contains three irreducible components of bidegree $(m,n) = (1,1)$. They correspond to three bitangent involutions $\sigma_i$. The surface also contains four irreducible components of bidegree $(0,1)$, corresponding to the trope-conics $C_{ijk}$.

\begin{theorem}\label{(3,7)}
Let $X$ be the Kummer quartic surface $X$ in characteristic $2$ associated with an ordinary curve of genus $2$.  Then, the surface $\Bit(X)$ is of bidegree $(m,n) = (3,7)$ in $G_1(\bbP^3)$; it consists of seven irreducible components, three of\, bidegree $(1,1)$, corresponding to three bitangent involutions, and four of\, bidegree $(0,1)$, corresponding to four tropes.
\end{theorem}

\begin{proof} 
For any plane $\Pi$ containing $p$, the plane quartic curve $X\cap \Pi$ has bitangent lines forming a line in $G_1(\bbP^3)$, which implies that the plane quartic curve is a double conic.  This is impossible for a normal quartic~$X$.  As we mentioned before, in Lemma~\ref{lemSkew}, the surface $\Bit(X)$ does not contain $\alpha$-planes.  Thus it is enough to show that the class $n$ of $\Bit(X)$ is equal to $3+4= 7$.  This is the number of bitangent lines contained in a general plane in $\bbP^3$.  We have already shown that $n \geq 7$.  On the other hand, it is known that the number of bitangent lines of a smooth plane quartic in characteristic $2$ is $7,4,2$, or $1$ if the Hasse--Witt invariant is equal to $3,2,1$, or $0$, respectively; see \cite{SV}. The assertion now follows.
\end{proof}

\begin{remark}
Consider the standard inversion transformation
$$
T\colon [x,y,z,w]\longmapsto [yzw, xzw, xyw, xyz]
$$
of $\bbP^3$. The restriction of $T$ to $\tilde{X}$ is a fixed-point-free involution $\sigma$ with $\tilde{X}/(\sigma)$ isomorphic to an Enriques surface.  Let us consider the Pl\"ucker embedding of the corresponding congruence of lines.  The six minors $p_{ij}$ of the matrix
$$\left(
    \begin{array}{cccc}
       x & y & z & w \\
       yzw & xzw & xyw & xyz \\
    \end{array}
  \right)$$
are given by
\begin{multline*}
[p_{12}, p_{13}, p_{14}, p_{23}, p_{24}, p_{34}]\\
=\left[zw\left(x^2+y^2\right),\ yw\left(x^2+z^2\right),\ yz\left(x^2+w^2\right),\ xw\left(y^2+z^2\right),\ xz\left(y^2+w^2\right),\ xy\left(z^2+w^2\right)\right],
\end{multline*}
which satisfies
$$
p_{12}p_{34}+ p_{13}p_{24}+p_{14}p_{23} = xyzw((x+y)(z+w)+(x+z)(y+w)+(x+w)(y+z))^2=0, 
$$
and the cubic equation
\begin{equation}\label{cubic}
p_{12}p_{13}p_{23}+ p_{12}p_{14}p_{24} + p_{13}p_{14}p_{34} + p_{23}p_{24}p_{34}=0
\end{equation}
(see \cite[Equation~(33)]{Em}).  

\begin{lemma}
Let $p_0=[x_0,y_0,z_0,w_0]\in X$ be a general point of the Kummer quartic surface.  The line $\ell$ passing through $p_0$ and $\sigma(p_0)$ is not a bitangent line of\, $X$.
\end{lemma}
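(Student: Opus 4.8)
The plan is to reduce the statement to a tangency computation at the single point $p_0$ and then to exploit the vanishing of cross terms of squares in characteristic $2$. Write $v=(x_0,y_0,z_0,w_0)$ and $v'=T(v)=(y_0z_0w_0,\,x_0z_0w_0,\,x_0y_0w_0,\,x_0y_0z_0)$, so that $\ell=\la p_0,\sigma(p_0)\ra$ is parametrized by $[s,t]\mapsto sv+tv'$. First I would record the reduction. Since $\sigma$ is fixed-point-free on $\tX$, for general $p_0$ we have $p_0\ne\sigma(p_0)$ and $\ell\not\subset X$ (as $X$ contains only finitely many lines while $S(\sigma)$ is two-dimensional). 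Thus $\ell\cap X$ is a degree-$4$ divisor on $\ell\cong\bbP^1$ whose support contains the two distinct points $p_0,\sigma(p_0)$. If $\ell$ were bitangent, this divisor would equal $2A+2B$ with $A\ne B$; comparing supports forces $\{A,B\}=\{p_0,\sigma(p_0)\}$, and in particular $\ell$ would be tangent to $X$ at $p_0$. Hence it suffices to show that, for general $p_0\in X$, the line $\ell$ is \emph{not} tangent to $X$ at $p_0$.

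Next I would make the tangency condition explicit. Consider the binary quartic $f(s,t)=F(sv+tv')=\sum_{k=0}^4 A_k\,s^{4-k}t^k$. Since $p_0,\sigma(p_0)\in X$ we have $A_0=F(v)=0$ and $A_4=F(v')=0$, so the point $p_0$ (the root $t=0$) is a root of $f$, and $\ell$ is tangent to $X$ at $p_0$ exactly when this root has multiplicity $\ge 2$, i.e. when $A_1=0$. So the whole statement comes down to showing that $A_1$ does not vanish identically as a function of $p_0$ on $X$.

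The computation of $A_1$ is where the characteristic-$2$ input enters, and I expect it to be the cleanest rather than the hardest step. Because $\mathrm{char}(\Bbbk)=2$, squaring kills cross terms: for each coordinate $(sv+tv')_i^2=s^2v_i^2+t^2{v'_i}^2$. Consequently every monomial of $F$ of the shape $x^2y^2$ (i.e. all the terms weighted by $a,b,c$) contributes to $f$ only the powers $s^4,s^2t^2,t^4$, hence nothing to the coefficient $A_1$ of $s^3t$. The only surviving contribution comes from the monomial $xyzw$, and a direct expansion of $\prod_i(sv_i+tv'_i)$ gives
$$A_1=(x_0y_0z_0)^2+(x_0y_0w_0)^2+(x_0z_0w_0)^2+(y_0z_0w_0)^2=\big(x_0y_0z_0+x_0y_0w_0+x_0z_0w_0+y_0z_0w_0\big)^2,$$
again using that a sum of squares is a square in characteristic $2$.

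Finally I would conclude. The above shows that $A_1$ vanishes precisely on $X\cap V(e_3)$, where $e_3=x_0y_0z_0+x_0y_0w_0+x_0z_0w_0+y_0z_0w_0$ is the third elementary symmetric function. Since $X$ is an irreducible quartic, it cannot be contained in the cubic $V(e_3)$, so $X\cap V(e_3)$ is a proper closed subset of $X$; for $p_0$ outside it we get $A_1\ne0$, whence $\ell$ is not tangent to $X$ at $p_0$ and therefore not bitangent. The only points needing care are the bookkeeping in the reduction step — checking $p_0\ne\sigma(p_0)$, that $\ell\not\subset X$, and that $\ell$ avoids the four $D_4$ points of $X$ and the base locus of $T$ — but each of these excludes only a proper closed subset of $X$, so the "general point" hypothesis absorbs them. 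I anticipate the main (minor) obstacle is confirming that the two-square monomials genuinely produce no $s^3t$ term, which the Frobenius identity $(sv_i+tv'_i)^2=s^2v_i^2+t^2{v'_i}^2$ settles at once.
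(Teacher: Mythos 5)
Your proof is correct and rests on essentially the same computation as the paper's: restrict $F$ to the line $\langle p_0,\sigma(p_0)\rangle$ and use the characteristic-$2$ fact that the square part $a(x^2y^2+z^2w^2)+b(x^2z^2+y^2w^2)+c(x^2w^2+y^2z^2)$ contributes only even-degree terms to the restricted binary quartic, so its odd coefficients come solely from $xyzw$ and are generically nonzero. The differences are cosmetic: you isolate the single coefficient of $s^3t$ (tangency at $p_0$), obtaining the clean formula $A_1=e_3(p_0)^2$ with explicit bad locus $X\cap V(e_3)$, whereas the paper computes the full restriction from the plane equations of $\ell$ and notes that both odd coefficients (of $x^3y$ and $xy^3$) are generically nonzero, i.e.\ the restriction is not a perfect square.
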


\begin{proof}
The following proof was suggested by a referee.  By plugging the parametric equation
$$
[x_0 +ty_0z_0w_0, y_0 + tx_0z_0w_0, z_0+ tx_0y_0w_0, w_0+tx_0y_0z_0]
$$
of the line $\ell$ into the equation \eqref{eq} of $X$, we can see that the term which is linear in $t$ is equal to $(x_0+y_0+z_0+w_0)t$.  Since a general point does not lie in the hyperplane $V(x+y+z+w)$, $\ell$ is not a bitangent line.
\end{proof}

Let $\ell$ be a line passing through $p_0$ and $\sigma(p_0)$.  Let $\ell \cap X = \{p_0, \sigma(p_0), p_0', p_0''\}$.  The image of $\ell$ under $\sigma$ is a cubic curve, and hence, $p_0', p_0''$ are not conjugate to $p_0, \sigma(p_0)$ under the action of $\sigma$.  Thus, the map $X \da G_1(\bbP^3)$ sending $p_0$ to $\ell =\la p_0, \sigma(p_0)\ra$ has degree $2$ onto its image.  Therefore, the Enriques surface $\tilde{X}/(\sigma)$ can be embedded into $G_1(\bbP^3)$ satisfying a cubic relation \eqref{cubic}. This suggests the following question.

\begin{question}
  Is the Enriques surface $\tilde{X}/( \sigma )$ a Reye congruence of bidegree $(7,3)$?
\end{question}
\end{remark}

\section{Kummer quartic surfaces in characteristic 2, 2-rank 1 case}\label{sec5}

In this section, we discuss the case of Kummer quartic surfaces associated with curves of $2$-rank $1$.  The Kummer quartic surface $X_1$ associated with a curve of genus~$2$ and $2$-rank $1$ is given by
\begin{equation}\label{KummerEq1}
X_1= V\left(\beta^2x^4+\alpha^2x^2z^2 + x^2zw + xyz^2 + y^2w^2 + z^4\right),
\end{equation}
where $\alpha, \beta$ are constants with $\beta \ne 0$; see \cite[Section~3]{Du}.  The surface $X_1$ has exactly two singular points
$$
p_1=[0,0,0, 1],\quad p_2=[0, 1, 0, 0]
$$
of type $D_8$, see \cite{Sh,Ka}, and contains two tropes defined by the hyperplane sections $x=0$ and $z=0$, respectively.  The two tropes meet at $p_1$ and $p_2$.

Consider the skew lines $\ell= V(x, y)$ and $\ell'= V(z, w)$.  In the same way as in the ordinary case, we see that the lines meeting $\ell, \ell'$ are bitangent lines of $X_1$.  The corresponding bitangent involution of $X_1$ is given by the restriction of a Cremona involution
\begin{equation}
\sigma_1\colon [x,y,z,w]\longmapsto \left[xz^2, yz^2, \beta x^2z, \beta x^2w\right].
\end{equation}
Thus, $\Bit(X_1)$ contains a smooth quadric surface $S(\sigma_1)$. 

There exists another bitangent involution
\begin{equation}
\sigma_2\colon [x,y,z,w]\longmapsto \left[x^2z, x^2w, xz^2, yz^2\right],
\end{equation}
which is a composite of $\sigma_1$ with a projective linear transformation
$$
\tau \colon [x,y,z,w]\lra [z, w, \beta x,\beta y].
$$
Note that $\tau$ is induced from the translation by the non-zero 2-torsion of the Jacobian of the curve. 

Now, let us consider the Pl\"ucker embeddings of the congruence of bitangent lines defined by $\sigma_2$.  The six minors $p_{ij}$ of the matrix
$$
\left(
    \begin{array}{cccc}
       x & y & z & w \\
       x^2z & x^2w & xz^2 & yz^2 \\
    \end{array}
    \right)
    $$
are given by
\begin{align*}
[p_{12}, p_{13}, p_{14}, p_{23}, p_{24}, p_{34}]
&=\left[x^2(xw+yz),\ 0,\ xz(xw+yz),\ xz(xw+yz),\ (xw+yz)^2,\ z^2(xw+yz)\right]\\
&=\left[x^2, 0, xz, xz, xw+yz, z^2\right].
\end{align*}
They satisfy
$$
p_{13}= 0,\quad p_{14}+p_{23}=0,\quad p_{12}p_{34}+p_{14}p_{23}=0.
$$
Thus, the congruence of lines $S(\sigma_2)$ is a quadric cone, a special linear section of the Grassmannian quadric.

We now conclude as follows. 

\begin{theorem}\label{(2,4)}
  Let $X_1$ be the Kummer quartic surface associated with a smooth curve of genus~$2$ and $2$-rank $1$.  Then, $\Bit(X_1)$ is of bidegree $(2,4)$ in $G_1(\bbP^3)$.  It consists of two tropes and two quadric surfaces; one of the latter is a smooth quadric $S(\sigma_1)$, and the other is a quadric cone $S(\sigma_2)$.
\end{theorem}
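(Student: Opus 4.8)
The plan is to establish Theorem \ref{(2,4)} by computing the two invariants of the bidegree separately: the order $m$ via the general upper bound established in Section \ref{sec3}, and the class $n$ via the Hasse-Witt invariant of a general plane section, exactly as in the proof of Theorem \ref{(3,7)}. First I would assemble the irreducible components already identified in this section: the two tropes cut out by $x=0$ and $z=0$ (which, as in the ordinary case, contribute components of bidegree $(0,1)$, being pencils of lines in a trope-plane), together with the smooth quadric $S(\sigma_1)$ and the quadric cone $S(\sigma_2)$ arising from the two bitangent involutions $\sigma_1,\sigma_2$. Each of $S(\sigma_1)$ and $S(\sigma_2)$ is a linear section of the Grassmannian quadric $\bbG$ by two further hyperplanes (as read off from the Pl\"ucker coordinates computed above), hence is a congruence of bidegree $(1,1)$; this gives the lower bound $m\ge 2$ and $n\ge 2+2=4$.

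Next I would pin down the order. By Proposition \ref{discri} and its corollary, for $d=4$ we have $m\le \tfrac14\cdot 4\cdot 3\cdot 2\cdot 1 = 6$, so this crude bound is not yet sharp enough; instead I would argue that the four components listed exhaust $\Bit(X_1)$, so that $m$ equals the sum of the orders of the components. The two tropes have order $0$, and each of the two quadric congruences has order $1$, giving $m=2$. To justify exhaustiveness I would invoke the birational-involution framework of Section \ref{involutions}: any component of non-zero order and non-zero class yields a bitangent involution of $X_1$, and I would show that $\sigma_1,\sigma_2$ are the only such involutions, using that they are the restrictions of the Cremona transformations displayed above together with the translation structure coming from the single non-zero $2$-torsion point of the Jacobian (the $2$-rank $1$ curve has exactly two $2$-torsion points, hence one non-trivial one, matching the count of two quadric components against the three in the ordinary case).

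For the class, I would cut $X_1$ with a general plane $H$ and identify the resulting plane quartic curve, then determine its Hasse-Witt invariant. By the theorem of Resmini and of St\"ohr--Voloch quoted in the proof of Theorem \ref{(3,7)}, the number of bitangent lines of a smooth plane quartic in characteristic $2$ is $7,4,2,1$ according as the Hasse-Witt invariant is $3,2,1,0$. Having shown $n\ge 4$ from the four components, it suffices to show the general section has Hasse-Witt invariant exactly $2$ (equivalently $2$-rank $2$), which forces $n=4$ and hence caps the class so that no further components can appear. This in turn confirms $m=2$, since the order equals the sum over components and the only components are the two tropes and the two quadrics.

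The main obstacle I expect is the Hasse-Witt computation for the general plane section of $X_1$, together with the exhaustiveness claim. The bidegree bounds give $m\le 6$ and $n\le$ (the sectional bound), but neither is automatically tight, so the real content is ruling out additional bitangent lines. I would handle this by the class computation: identifying the $2$-rank of the general section of $X_1$ as $2$ is a direct but delicate calculation with the defining equation \eqref{KummerEq1}, and once $n=4$ is established, the decomposition into precisely the four listed components follows because their classes already sum to $4$ and each bitangent involution accounts for one $(1,1)$-component while each trope accounts for one $(0,1)$-component. A subtlety to watch is that $H\cap X_1$ should be verified to be smooth (so that the Resmini--St\"ohr--Voloch count applies), and that no bitangent line through a general point passes through the $D_8$ singular points $p_1,p_2$, paralleling the genericity hypotheses used in Corollary \ref{bidegrees2}.
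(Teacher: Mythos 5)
Your proposal follows essentially the same route as the paper's proof: assemble the four components already exhibited in the section (the two tropes of bidegree $(0,1)$ and the two linear congruences $S(\sigma_1)$, $S(\sigma_2)$ of bidegree $(1,1)$), then cap the class at $4$ by computing the Hasse--Witt invariant of a general plane section of $X_1$ to be $2$ and invoking St\"ohr--Voloch, which forces these components to exhaust $\Bit(X_1)$ and yields the bidegree $(2,4)$. The paper's proof is exactly this one-step argument (``it is enough to show that the class $n=4$''); your auxiliary idea of classifying bitangent involutions via the $2$-torsion of the Jacobian is not needed (and its counting heuristic --- one nonzero $2$-torsion point versus two quadric components --- does not literally match), but you correctly anchor the final conclusion on the class computation, just as the paper does.
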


\begin{proof}
We know that $\Bit(X)$ contains two $\beta$-planes and two irreducible components of bidegree $(1,1)$.  Taking the partial derivatives, we find that they are linearly independent, and hence, there are no inseparable projection centers. So, $\Bit(X)$ does not contain $\alpha$-planes.  The equation of $X$ is of the form
$$
Q^2+x^2zw+xyz^2 = 0.
$$
Substituting $w = ax+by+cz$, we obtain the equation of a general plane section $q^2+x^2z(ax+by)+xyz^2 = 0$. It is immediate to check that it is projectively equivalent to a plane quartic of type~\eqref{wall-2} from Proposition~\ref{wall}. Thus, the number of
bitangent lines is equal to $4$, and hence, the class of $\Bit(X)$ is equal to $4$.  So, we have found all irreducible components of $\Bit(X)$: two $\beta$-planes and two components of bidegree $(1,1)$.
\end{proof}

\begin{remark}
Let $\ell_0 = V(x, z)$ be the line which is the intersection of the two trope-hyperplanes.  Then, any bitangent line defined by $\sigma_2$ meets $\ell_0$ as follows.  Let $p= [x_0,y_0,z_0,w_0] \in \bbP^3$ be a general point and $q=\sigma_2(p)$.  The bitangent line $\la p, q\ra$ is given by
$$
\left[x_0(s+tx_0z_0),\ sy_0+tx_0^2w_0,\ z_0(s+tx_0z_0),\ sw_0+ty_0z_0^2\right], 
$$
where $[s,t]\in \bbP^1$ is a parameter.  The line $\la p, q\ra$ meets $\ell_0$ at $[0,x_0,0,z_0]$ when $[s, t]= [x_0z_0,1]$.
\end{remark}

\section{Kummer quartic surfaces in characteristic 2, supersingular case}\label{sec6}

In this section, we consider the supersingular case.  In this case, the Kummer quartic surface $X_0$ is given by the equation
\begin{equation}\label{ssKumeq}
X_0= V(x^3w +\alpha x^3y + x^2yz + \alpha^2x^2z^2 + xy^3+ y^2w^2 + z^4),
\end{equation}
where $\alpha$ is a constant; see \cite[Equation~(5.1)]{Duc} and \cite[Section~3]{Du}.  The Kummer surface has one singular point $p_0 = [0,0,0,1]$, which is an elliptic singularity of type $\raise0.2ex\hbox{\textcircled{\scriptsize{4}}}_{0,1}^1$ in the sense of Wagreich, see \cite{Ka}, and contains a trope defined by the hyperplane $x=0$.

Let us consider the following Cremona involution:
\begin{equation}\label{}
T\colon  [x,y,z,w]\longmapsto [x^3, x^2y, \alpha x^3 + x^2z + xy^2, \alpha x^2y + x^2w + y^3].
\end{equation}
The involution $T$ preserves $X_0$ and restricts to a bitangent involution $\sigma$ of $X_0$.  This follows from a direct calculation.

Let us consider the Pl\"ucker embeddings of bitangent lines. The six minors $p_{ij}$ of the matrix
$$
\left(
    \begin{array}{cccc}
       x & y & z & w \\
       x^3 & x^2y & \alpha x^3 + x^2z + xy^2 & \alpha x^2y + x^2w + y^3 \\
    \end{array}
    \right)
    $$
are given by
\begin{multline*}
  [p_{12}, p_{13}, p_{14}, p_{23}, p_{24}, p_{34}]\\
  \begin{aligned}
&=\left[0,\ x^2(\alpha x^2+y^2),\ xy(\alpha x^2+y^2),\ xy(\alpha x^2+y^2),\ y^2(\alpha x^2+y^2),\ (xw+yz)(\alpha x^2+y^2)\right]\\
    & =\left[0, x^2, xy, xy, y^2, xw+yz\right].  
  \end{aligned}
\end{multline*}
They satisfy
$$
p_{12}= 0,\quad p_{14}+p_{23}=0, \quad p_{13}p_{24}+p_{14}p_{23}=0.
$$
Thus, the congruence of lines $S(\sigma)$ is a quadric cone.

\begin{theorem}\label{(1,2)}
Let $X_0$ be the Kummer quartic surface associated with a supersingular curve.  Then, $\Bit(X_0)$ is of bidegree $(1,2)$ in $G_1(\bbP^3)$, and it consists of a trope and a quadric cone.
\end{theorem}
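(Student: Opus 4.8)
The plan is to follow verbatim the template of Theorems \ref{(3,7)} and \ref{(2,4)}: produce the two evident components of $\Bit(X_0)$, and then pin everything down by computing the class $n$, which equals the number of bitangent lines of a general plane section. First I would record the two components already in hand. The Pl\"ucker computation preceding the statement shows that the bitangent involution $\sigma$ attached to the Cremona involution $T$ yields a congruence $S(\sigma)$ equal to a quadric cone; by the remark closing Section \ref{involutions}, $S(\sigma)$ is then an irreducible component of $\Bit(X_0)$, and as a quadric cone its bidegree is $(1,1)$. Next, the trope-plane $V(x)$ meets $X_0$ along a conic taken with multiplicity $2$, so every line in $V(x)$ is a bitangent line of $X_0$; hence the plane of all lines lying in $V(x)$ is a component of $\Bit(X_0)$ of bidegree $(0,1)$. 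Adding bidegrees gives $(1,1)+(0,1)=(1,2)$, so it remains only to show that these two components exhaust $\Bit(X_0)$.

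For that I would compute the class $n$ directly. A general hyperplane $H$ cuts $X_0$ in a smooth plane quartic $\Gamma = H\cap X_0$, and the lines of $\Bit(X_0)$ lying in $H$ are precisely the bitangent lines of $\Gamma$; thus $n$ is the number of such bitangents. By the theorem of St\"ohr and Voloch \cite{SV} already quoted, a smooth plane quartic in characteristic $2$ has $7,4,2$, or $1$ bitangents according as its Hasse-Witt invariant is $3,2,1$, or $0$. The crux of the argument, and the step I expect to be the main obstacle, is to show that the Hasse-Witt invariant of the general section $\Gamma$ equals $1$: I would substitute a general linear form into the explicit equation of $X_0$, read off the Cartier--Manin (Hasse-Witt) matrix of the resulting quartic, and verify that its rank is $1$. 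One should check en route that the invariant is not $0$, which would force $n=1$; this is exactly the content of Theorem \ref{non-conical}, asserting that a supersingular plane quartic cannot be a section of a non-conical quartic surface, so that possibility is excluded.

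Granting $n=2$, the conclusion is immediate. The two components already produced contribute exactly $1+1=2$ bitangent lines of $\Gamma$: since $S(\sigma)$ has class $1$, the general plane $H$ contains a unique line of $S(\sigma)$, which is a bitangent of $\Gamma$; and the trope-plane gives the line $V(x)\cap H$, which is a bitangent of $\Gamma$ because $V(x)\cap X_0$ is a double conic, so that line meets $\Gamma$ in two points each counted with multiplicity $2$. As these realize all $n=2$ bitangents of $\Gamma$, no further component of positive class can occur; the trope and the cone already account for the full bidegree $(1,2)$, and hence $\Bit(X_0)$ consists exactly of the trope-plane and the quadric cone, of bidegree $(1,2)$.
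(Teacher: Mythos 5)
Your proposal is correct and follows essentially the same route as the paper: take the two components already exhibited (the $\beta$-plane of lines in the trope $V(x)$, of bidegree $(0,1)$, and the quadric cone $S(\sigma)$ coming from the bitangent involution, of bidegree $(1,1)$), then pin down the class by computing that the Hasse--Witt invariant of a general plane section equals $1$ and applying St\"ohr--Voloch to get $n=2$. Your extra remarks (the explicit matching of the two bitangents in a general plane with the two components, and the appeal to Theorem \ref{non-conical} to exclude invariant $0$, which is legitimate since that theorem is proved independently) only elaborate on what the paper leaves implicit.
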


\begin{proof}
Taking the partial derivatives, we find again that there are no inseparable projection centers. So, $\Bit(X)$ does not contain $\alpha$-planes.  Since we have already found irreducible components of $\Bit(X)$ of bidegrees $(0,1)$ and $(1,1)$, it is enough to check that the number of bitangent lines of a general plane section is equal to $2$.  This can be seen directly by plugging $w = ax+by+cz$ into the equation of $X$ and reducing the non-square part of the obtained equation to the form~\eqref{wall-3} from Proposition~\ref{wall}.
\end{proof}

\begin{remark}
Let $\ell_0=V(x, y)$.  Then, any bitangent line meets $\ell_0$ as follows.  Let $p= [x_0,y_0,z_0,w_0] \in \bbP^3$ be a general point and $q=\sigma(p)$.  The bitangent line $\la p, q\ra$ is given by
$$
[x_0(s+tx_0^2), y_0(s+tx_0^2), z_0(s+tx_0^2) + tx_0(\alpha x_0^2 + y_0^2), w_0(s+tx_0^2)+ty_0(\alpha x_0^2+y_0^2)],
$$
where $[s,t]\in \bbP^1$ is a parameter.  The line $\la p, q\ra$ meets $\ell_0$ at $[0,0,x_0,y_0]$ when $[s, t]= [x_0^2, 1]$.
\end{remark}



\begin{thebibliography}{Kum66+++}

\bibitem[CZ24]{Zucconi1}
P.~Corvaja and F.~Zucconi, \textit{Bitangents to a quartic surface and infinitesimal Torelli}, J.~Differential Geom.\ {\bf 126} (2024), no.~3, 1097--1120, \doi{10.4310/jdg/1717348871}.

\bibitem[Cos83]{Cossec}
  F.\,R.~Cossec, \textit{Reye congruences}, Trans.\ Amer.\ Math.\ Soc.\ {\bf 280} (1983), no.~2, 737--751, \doi{10.1090/s0002-9947-1983-0716848-4}.

\bibitem[Dol12]{DoC}
I.\, V.~Dolgachev, \textit{Classical Algebraic Geometry. A Modern View}, Cambridge Univ.\ Press, Cambridge, 2012, \doi{10.1017/CBO9781139084437}. 

\bibitem[Dol22]{DoReid}
\bysame, \textit{15-nodal quartic surfaces. Part $I$: quintic del Pezzo surfaces and congruences of lines in $\mathbb{P}^3$}, In: \emph{Recent developments in algebraic geometry---to Miles Reid for his 70th birthday}, pp.~66--115, London Math.\ Soc.\ Lecture Note Ser., vol.~478, Cambridge Univ.\ Press, Cambridge, 2022.

\bibitem[DK25]{Enriques2}
I.~Dolgachev and S.~Kond\=o, \textit{Enriques surfaces {\rm II}}, Springer, Singapore, 2025, \doi{10.1007/978-981-96-1513-1}. 

\bibitem[Duc08]{Duc}
L.~Ducrohet, \textit{The action of the Frobenius map on rank 2 vector bundles over a supersingular genus 2 curve in characteristic 2}, Math.~Z.\ {\bf 258} (2008), no.~3, 477--492, \doi{10.1007/s00209-007-0292-3}.

\bibitem[Duq10]{Du}
S.~Duquesne, \textit{Traces of the group law on the Kummer surface of a curve 
of genus 2 in characteristic 2}, Math.\ Comput.\ Sci. {\bf 3} (2010), no.~2, 173--183, \doi{10.1007/s11786-009-0013-x}.

\bibitem[Emc26]{Em}
A.\ Emch, \textit{On Surfaces and Curves which are Invariant Under Involutory Cremona Transformations}, Amer.~J.\ Math.\ {\bf 48} (1926), no.~1, 21--44, \doi{10.2307/2370818}.

\bibitem[Har77]{Hartshorne}
R.~Hartshorne, \textit{Algebraic geometry}, Grad.\ Texts in Math., vol.~52, Springer-Verlag, New York-Heidelberg, 1977.

\bibitem[Hud27]{Hudson2}
H.\,P.~Hudson, \textit{Cremona transformations in plane and space}, Cambridge Univ.\ Press 1927.

\bibitem[Hud90]{Hudson1}
R.\,W.\,H.\,T.~Hudson, \textit{Kummer's quartic surface} (with a foreword by W.~Barth. Revised reprint of the 1905 original), Cambridge Math.\ Lib., Cambridge Univ.\ Press, 1990. 
    
\bibitem[Jes03]{J}
  C.\,M.~Jessop, \textit{A treatise on the line complex}, Cambridge Univ.\ Press, 1903.

\bibitem[Kat78]{Ka}
T.~Katsura, \textit{On Kummer surfaces in characteristic $2$}, 
In: \emph{Proceedings of the International Symposium on Algebraic Geometry} (Kyoto Univ., Kyoto, 1977), pp.~525--542, Kinokuniya Book Store Co., Ltd., Tokyo 1978. 

\bibitem[KK23]{KK}
  T.~Katsura and S.~Kond\=o, \textit{Kummer surfaces and quadratic line complexes in characteristic two}, Adv.\ Math.\ \textbf{479} (2025), Paper No.~110416, \doi{10.1016/j.aim.2025.110416}.
  
\bibitem[KK00]{Kulikov}
  Va.\,S.~Kulikov and  Vi.\,S.~Kulikov, \textit{Generic coverings of the plane with  $A$-$D$-$E$-singularities}, Izv.\ Ross.\ Akad.\ Nauk Ser.\ Mat.\ {\bf 64} (2000), no.~6, 65--106 (Russian); Izv.\ Math.\ {\bf 64} (2000), no.~6, 1153--1195 (English), \doi{10.1070/im2000v064n06abeh000312}.

\bibitem[Kum67]{Kummer}
E.\,E.~Kummer, \textit{\"Uber die algebraischen Strahlensysteme, 
  in's Besondere \"uber die der ersten und zweiten Ordnung}, Abh.\  K\"oniglichen Akad.\ Wiss.\ Berlin 1866 (1867), Math.\ Abh., 1--120, D\"ummler, Berlin, 1867. 

\bibitem[LP02]{LP}
Y.~Laszlo and C.~Pauly, \textit{The action of the Frobenius map on rank $2$ vector bundles in characteristic $2$}, J.~Algebraic Geometry {\bf 11} (2002), no.~2, 219--243, \doi{10.1090/S1056-3911-01-00310-1}. 

\bibitem[LP04]{LP2}
\bysame, \textit{The Frobenius map, rank $2$ vector bundles and Kummer's quartic surface in characteristic $2$ and $3$}, Adv.\ Math.\ {\bf 185} (2004), no.~2, 246--269, \doi{10.1016/s0001-8708(03)00211-1}.

\bibitem[NR06]{Nart}
E.~Nart and C.~Ritzenthaler, \textit{Non-hyperelliptic curves of 
genus three over finite fields of characteristic two},
J.~Number Theory {\bf 116} (2006), no.~2, 443--473, \doi{10.1016/j.jnt.2005.05.014}.

\bibitem[dRe70]{R}
M.\,J.~de Resmini, \textit{Sulle quartiche piane sopra un campo di caratteristica due}, Ricerche di Mat.\ {\bf 19} (1970), 133--160.

\bibitem[Sal58]{Salmon}
G.~Salmon, \textit{A treatise on the analytic geometry of three dimensions} (Revised by R.\,A.\,P.~Rogers, 7th ed.\ Vol.1, Edited by C.\,H.~Rowe), Chelsea Publishing Co., New York 1958.

\bibitem[SR49]{SR}
J.\,G.~Semple and L.~Roth, \textit{Introduction to Algebraic Geometry}. Oxford,  Clarendon Press, 1949.

\bibitem[Shi74]{Sh}
T.~Shioda, \textit{Kummer surfaces in characteristic $2$}, 
Proc.\ Japan Acad.\ {\bf 50} (1974), 718--722,
  \doi{10.3792/pja/1195518796}.

\bibitem[SV87]{SV}
K.-O\ St\"ohr and J.\,F.~Voloch, \textit{A formula for the Cartier operator on plane algebraic curves}, J.~reine angew.\ Math., {\bf 377} (1987), 49--64, \doi{10.1515/crll.1987.377.49}.

\bibitem[Stu92]{Sturm}
  R.~Sturm, \textit{Die Gebilde ersten und zweiten Grades der Liniengeometrie in synthetischer Behandlung}, Theil I--III, B.\,G.\ Teubner Verlag, Leipzig, 1892--1896.
  
\bibitem[Tik80]{Tikhomirov}
A.\,S.~Tikhomirov, \textit{Geometry of the Fano surface of a double $P^3$  branched in a quartic}, Izv.\ Akad. Nauk SSSR Ser.\ Mat.\ {\bf 44} (1980), no.~2, 415--442, 479. 

\bibitem[Wag70]{W}
P.~Wagreich, \textit{Elliptic singularities of surfaces}, 
Amer.~J.\ Math.\ {\bf 92} (1970), 419--454, \doi{10.2307/2373333}.

\bibitem[Wal95]{Wall}
C.\,T.\,C.~Wall, \textit{Quartic curves in characteristic 2}, Math.\ Proc.\ Camb.\ Phil.\ Soc.\ {\bf 117} (1995), no.~3, 393--414, \doi{10.1017/s0305004100073254}.

\bibitem[Wel81]{Welters}
G.\,E.~Welters, \textit{Abel-Jacobi isogenies for certain types of Fano threefolds}, Math.\ Centre Tracts, vol.~141, Mathematisch Centrum, Amsterdam, 1981.

\end{thebibliography}
\end{document}